\newtheorem{thm}{Theorem}[section]
\newtheorem{Con}[thm]{Conjecture}
\newtheorem{cor}[thm]{Corollary}
\newtheorem{lem}[thm]{Lemma}
\newtheorem{pro}[thm]{Proposition}
\theoremstyle{definition}
\newtheorem{rem}[thm]{Remark}
\numberwithin{equation}{section}
\newcommand{\ex}{\mathbb{E}}
\newcommand{\re}{\textup{Re}}
\newcommand{\ep}{\varepsilon}
\newcommand{\newabstract}[1]{%
  \par\bigskip
  \csname otherlanguage*\endcsname{#1}%
  \csname captions#1\endcsname
  \item[\hskip\labelsep\scshape\abstractname.]
}
\begin{document}

\baselineskip=17pt

\title[An effective LI conjecture and applications]{An effective Linear Independence Conjecture for the zeros of the Riemann zeta function and applications}

\author{Youness Lamzouri}

\dedicatory{For Helmut Maier on his seventieth birthday}

\address{
Institut \'Elie Cartan de Lorraine, 
Universit\'e de Lorraine, et Institut Universitaire de France,
BP 70239, 
54506 Vandoeuvre-l\`es-Nancy Cedex,
France}

\email{youness.lamzouri@univ-lorraine.fr}


\begin{abstract} Using the same heuristic argument leading to the Lang-Waldschmidt Conjecture in the theory of linear forms in logarithms, we formulate an effective version of the Linear Independence conjecture for the ordinates of the non-trivial zeros of the Riemann zeta function. Then assuming this conjecture, we obtain Omega results for the error term in the prime number theorem, which are conjectured to be best possible by  Montgomery. This was claimed to appear in Monach's thesis by Montgomery and Vaughan in their classical book, but such a result or its proof is nowhere to be found in this thesis or in the litterature. Moreover, if in addition to this effective Linear Independence conjecture we assume a conjecture of Gonek and Hejhal on the negative moments of the derivative of the Riemann zeta function, we can show that the summatory function of the M\"obius function $M(x)$ is $\Omega_{\pm}\left(\sqrt{x}(\log\log\log x)^{5/4}\right)$. This conditionally resolves a part of a conjecture of Gonek. 

\end{abstract}

\subjclass[2020]{Primary 11M26 11N56; Secondary 11N05 11J72}

\thanks{}

\maketitle


\section{Introduction}

The Linear Independence conjecture LI states that the positive imaginary parts of the zeros of the Riemann zeta function $\zeta(s)$ are linearly independent over the rationals. This conjecture first appeared in a work of Wintner \cite{Wi}, who used it to study the error term in the prime number theorem. Let 
$$ E(x):= \frac{\psi(x)-x}{\sqrt{x}},$$
where $$\psi(x)= \sum_{n\leq x} \Lambda(n),$$
and $\Lambda(n)$ is the classical von Mongoldt function, which equals $\log p$ if $n$ is a power of the prime $p$, and equals $0$ otherwise. Assuming the Riemann hypothesis RH and the LI conjecture, Wintner \cite{Wi} showed\footnote{Wintner showed that the existence of the measure $\nu$ can be proved under RH alone, but needed LI to prove that it is absolutely continuous.} that there exists an absolutely continuous probability measure $\nu$ such that 
\begin{equation}\label{EqLimDistribError} \lim_{X\to \infty} \int_{2}^X f(E(x)) \frac{dx}{x}= \int_{-\infty}^{\infty} f(x) d\nu(x),
\end{equation}
for all bounded continuous functions $f$ on $\mathbb{R}.$

A few years later, Ingham \cite{In} used the LI conjecture to disprove Mertens' conjecture for the summatory function of the M\"obius function, which asserts that $|M(x)|\leq \sqrt{x}$ for all  $x>1$, where 
$$ M(x):=\sum_{n\leq x} \mu(n).$$
This conjecture was unconditionally disproved by Odlyzko and te Riele \cite{OdRi} in 1985. Assuming LI, Ingham \cite{In} proved in 1942 a stronger version of this statement, namely that 
$$\limsup_{x\to \infty}\frac{M(x)}{\sqrt{x}}=\infty, \text{ and } \liminf_{x\to \infty}\frac{M(x)}{\sqrt{x}}=-\infty. $$

More recently, the LI conjecture was extended to families of Dirichlet $L$-functions attached to primitive characters modulo $q$ by Rubinstein and Sarnak \cite{RuSa}, who used it to study Chebyshev's bias for primes in arithmetic progressions. Indeed, they were the first to provide a rigorous justification for the existence of this bias, assuming the LI conjecture in this context together with the generalised Riemann hypothesis. More precisely, they proved under these assumptions that  ``the event'' $\pi(x; q, a) >\pi(x; q, b)$ (where $\pi(x; q, a)$ denotes the number of primes $p\leq x$ such that $p\equiv a \pmod q$) has a logarithmic density exceeding $1/2$, if and only if $a$ is a quadratic non-residue, and $b$ is a quadratic residue modulo $q$. Pushing these ideas further, several authors used the LI conjecture to study the logarithmic densities in Chebyshev's bias and more general prime number races (see for example \cite{FeMa}, \cite{FiMa}, \cite{FHL}, \cite{HaLa}, \cite{La}, \cite{Me}).

In this paper, we will formulate an effective version of the LI conjecture for the zeros of the Riemann zeta function, and use it to exhibit extreme values of the error term in the prime number theorem, the summatory function of the M\"obius function, as well as more general sums over the zeros of the zeta function. 

Following the same heuristic argument leading to the Lang-Waldschmidt Conjecture (1978) in the theory of linear forms in logarithms \cite{Lang} (see the introduction to Chap. X and XI, p. 212-217), we formulate the following conjecture. 

\begin{Con}[The Effective LI conjecture (ELI)]\label{EffectiveLI}
Let $\{\gamma\}$ denote the set of the ordinates of the non-trivial zeros of the Riemann zeta function. For every $\ep>0$ there exists a positive constant $C_{\ep}$ (possibly not effective) such that for all real numbers $T\geq 2$ we have 
\begin{equation}\label{ELI}\Big|\sum_{0<\gamma\leq T} \ell_{\gamma} \gamma \Big|\geq C_{\ep} e^{-T^{1+\ep}},
\end{equation}
where the $\ell_{\gamma}$ are integers, not all zero, such that $|\ell_{\gamma}|\leq N(T),$ and $N(T)$ is the number of zeros of $\zeta(s)$ with imaginary part in $[0, T]$.
\end{Con}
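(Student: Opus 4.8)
Since ELI is a conjecture and not a theorem, what I would present is not a proof but the counting heuristic underlying Lang--Waldschmidt, transported to the zeros of $\zeta$, together with the bookkeeping that produces the exponent $1+\ep$ in \eqref{ELI}. First I would work under LI, so that the linear form $L(\underline{\ell}):=\sum_{0<\gamma\leq T}\ell_{\gamma}\gamma$ is nonzero for every nonzero integer vector $\underline{\ell}=(\ell_{\gamma})_{0<\gamma\leq T}$. The task then becomes the purely diophantine one of bounding $\min|L(\underline{\ell})|$ from below over the nonzero $\underline{\ell}$ with $|\ell_{\gamma}|\leq N(T)$ --- the exact analogue of asking how small a nonzero linear form $b_1\log\alpha_1+\cdots+b_n\log\alpha_n$ in logarithms of algebraic numbers can be, with the ordinates $\gamma$ playing the role of the $\log\alpha_i$, the bound $N(T)$ on the coefficients playing the role of the bound on the $b_i$, and $T$ playing the role of the heights.

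Next I would count both sides. There are $N(T)$ ordinates in $(0,T]$, and each $\ell_{\gamma}$ runs over $2N(T)+1$ integers, so there are $(2N(T)+1)^{N(T)}$ admissible vectors $\underline{\ell}$; on the other hand $|L(\underline{\ell})|\leq N(T)\cdot N(T)\cdot T=N(T)^2T$, so all of these values lie in an interval of length $2N(T)^2T$ that also contains the value $L(\underline{0})=0$. The heuristic step --- the leap that underlies Lang--Waldschmidt and has no rigorous justification --- is to assume that the values $L(\underline{\ell})$ do not cluster abnormally near the origin, so that, up to lower-order factors, the distance from $0$ to the nearest other value is at least the length of the interval divided by the number of values:
\[
\min_{\underline{\ell}\neq 0}\Big|\sum_{0<\gamma\leq T}\ell_{\gamma}\gamma\Big|\;\gg\;\frac{N(T)^2T}{(2N(T)+1)^{N(T)}}.
\]

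It then remains to simplify. By the classical estimate $N(T)\sim\tfrac{T}{2\pi}\log\tfrac{T}{2\pi}$ we have $\log\!\big((2N(T)+1)^{N(T)}\big)=N(T)\log(2N(T)+1)\sim\tfrac{1}{2\pi}T(\log T)^2$, which for every fixed $\ep>0$ is $o(T^{1+\ep})$ since $(\log T)^2=o(T^{\ep})$; the polynomial factor $N(T)^2T$ only improves matters. Hence the right-hand side above exceeds $e^{-T^{1+\ep}}$ once $T$ is large enough in terms of $\ep$, and absorbing the finitely many smaller values of $T$ into $C_{\ep}$ yields \eqref{ELI}. In fact the heuristic suggests the stronger bound $e^{-c\,T(\log T)^2}$, so the exponent $1+\ep$ is deliberately generous, which is also what leaves room for the polynomial and constant factors.

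The hard part is that none of this can be made rigorous: there is no known mechanism forcing the values $L(\underline{\ell})$ to avoid a neighbourhood of $0$ of the predicted size --- this is precisely why Lang--Waldschmidt itself remains open --- and even LI, the bare assertion that $L(\underline{\ell})\neq 0$ for $\underline{\ell}\neq 0$, is unproved. The best one could realistically hope to establish unconditionally is a metric (``almost all'') statement, in which the vectors $\underline{\ell}$ violating the bound form a set of density $0$; promoting such a statement to one valid for \emph{every} nonzero $\underline{\ell}$, as \eqref{ELI} demands, is the genuine obstacle, and the reason the result is recorded only as a conjecture.
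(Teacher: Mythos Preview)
Your heuristic is essentially identical to the paper's: both invoke LI to make the linear forms nonzero, count the $(2N(T)+O(1))^{N(T)}=\exp\big((\tfrac{1}{2\pi}+o(1))T(\log T)^2\big)$ admissible vectors, confine their values to an interval of length $\asymp T^3(\log T)^2$, and then invoke the Lang--Waldschmidt equidistribution heuristic to predict a gap of size $\exp\big(-(\tfrac{1}{2\pi}+o(1))T(\log T)^2\big)$, which is comfortably larger than $e^{-T^{1+\ep}}$. The only point the paper adds that you omit is the matching Dirichlet box-principle upper bound, showing that linear forms this small actually exist and hence that the heuristic lower bound is essentially sharp.
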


The heuristic argument employed by Lang and Waldschmidt works in our setting as follows.  Assuming the LI conjecture, the linear combinations $\sum_{0<\gamma\leq T} \ell_{\gamma} \gamma$ with $|\ell_{\gamma}|\leq N(T)$ are all distinct and there are $(2N(T))^{N(T)}= \exp\big((1/(2\pi)+o(1)) T(\log T)^2\big)$ of them. Furthermore, they all lie in an interval of the form $[-C T^3(\log T)^2, CT^3(\log T)^2]$ for some positive constant $C$, since $|\sum_{0<\gamma\leq T} \ell_{\gamma} \gamma|\leq N(T) \sum_{0<\gamma\leq T} \gamma \ll T^3(\log T)^2.$ 
If these sums were equidistributed in this interval, one would have 
\begin{align*}
\left|\sum_{0<\gamma\leq T} \ell_{\gamma} \gamma\right|&\gg T^3(\log T)^2 \exp\left(-\left(\frac{1}{2\pi}+o(1)\right) T(\log T)^2\right)\\
&=  \exp\left(-\left(\frac{1}{2\pi}+o(1)\right) T(\log T)^2\right), 
\end{align*}
for all $|\ell_{\gamma}|\leq N(T)$ such that the $\ell_{\gamma}$'s are not all $0$.
On the other hand, by considering all such linear combinations with coefficients in the interval $[0, N(T)]$, it follows from  Dirichlet's box principle that there is a linear combination $\sum_{0<\gamma\leq T} \ell_{\gamma} \gamma$ with the $\ell_{\gamma}$'s all in the interval $[-N(T), N(T)]$ and such that 
$$ |\sum_{0<\gamma\leq T} \ell_{\gamma} \gamma|\ll \exp\left(\left(-\frac{1}{2\pi}+o(1)\right) T(\log T)^2\right).
$$
Hence, one might guess that for every fixed $\ep$ there exists a positive constant $C_{\ep}$ (which is possibly ineffective), such that 
\begin{equation}\label{Strong ELI}|\sum_{0<\gamma\leq T} \ell_{\gamma} \gamma|\geq C_{\ep} \exp\left(-\frac{(1+\ep)}{2\pi}T(\log T)^{2}\right),
\end{equation}
for all $|\ell_{\gamma}|\leq N(T)$ such that the $\ell_{\gamma}$'s are not all $0$. Note that this hypothesis is much stronger than the Effective LI conjecture stated in \eqref{ELI}.

\subsection{The error term in the prime number theorem} 

A classical result of Von Koch (see \cite[Page 116]{Da}) asserts that the Riemann hypothesis implies the  asymptotic formula 
$$ \psi(x)= x +O\big(\sqrt{x} (\log x)^2\big).$$
Under the same assumption, Gallagher \cite{Ga} showed that one can improve this to
\begin{equation}\label{EqGallagher}
\psi(x)= x +O\big(\sqrt{x} (\log \log x)^2\big),
\end{equation}
except on a set of finite logarithmic measure. 
On the other hand, Littlewood (see \cite[Chapter 5]{InBook}) proved unconditionally that 
\begin{equation}\label{EqLittlewood}
  \psi(x)-x= \Omega_{\pm}(\sqrt{x} \log\log\log x).  
\end{equation}

Here the notation $f(x)=\Omega_{+}(g(x))$ means $\limsup_{x\to \infty} f(x)/g(x)>0$, and $f(x)=\Omega_{-}(g(x))$ means $\liminf_{x\to \infty} f(x)/g(x)<0$. Moreover, $f(x)=\Omega_{\pm}(g(x))$ means that we both have $f(x)=\Omega_{+}(g(x))$ and $f(x)=\Omega_{-}(g(x))$.
In proving the Omega result \eqref{EqLittlewood}, Littlewood assumed RH, since otherwise one has a stronger estimate 
\begin{equation}\label{EqRHFALSEPNT}
    \psi(x)-x=\Omega_{\pm}\left(x^{\sigma-\ep}\right),
\end{equation}
if $\rho=\sigma+it$ is a non-trivial zero of the zeta function with $\sigma>1/2$.

By studying the tail of the  limiting distribution $\nu$ in \eqref{EqLimDistribError}, Montgomery \cite{Mo} conjectured in 1980 that 
\begin{equation}\label{ConjectureLim}
 \limsup_{x\to \infty} \frac{\psi(x)-x}{\sqrt{x}(\log\log\log x)^2}= \frac{1}{2\pi}, \text{ and } \liminf_{x\to \infty} \frac{\psi(x)-x}{\sqrt{x}(\log\log\log x)^2}= -\frac{1}{2\pi}.
 \end{equation}
 Our first result resolves a part of this conjecture under ELI. 
\begin{thm}\label{ThmErrorPNT}
Assume the Effective LI conjecture (ELI). Then we have
\begin{equation}\label{LimsupPNT}
 \limsup_{x\to \infty} \frac{\psi(x)-x}{\sqrt{x}(\log\log\log x)^2}\geq \frac{1}{2\pi},
 \end{equation}
 and 
 \begin{equation}\label{LiminfPNT}
 \liminf_{x\to \infty} \frac{\psi(x)-x}{\sqrt{x}(\log\log\log x)^2}\leq -\frac{1}{2\pi},
 \end{equation}
\end{thm}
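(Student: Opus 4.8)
The plan is to start from the classical explicit formula, writing
\[
E(x)=\frac{\psi(x)-x}{\sqrt{x}}=-\sum_{\gamma}\frac{x^{i\gamma}}{\rho}+\text{(lower order terms)}
=-2\re\sum_{0<\gamma}\frac{x^{i\gamma}}{\tfrac12+i\gamma}+o(1),
\]
valid under RH (the trivial zeros and the pole contribute only $O(1/\sqrt{x})$). Truncating the sum at height $T$ costs an error of size $O\!\big(\tfrac{x}{T}(\log xT)^2\big)$ by the standard truncated explicit formula, so writing each term as $\frac{x^{i\gamma}}{\rho}=\frac{e^{i\gamma\log x}}{|\rho|}e^{-i\arg\rho}$ we get
\[
-E(x)=2\sum_{0<\gamma\leq T}\frac{\cos(\gamma\log x-\arg\rho)}{|\rho|}+O\!\Big(\frac{x^{1/2}(\log xT)^2}{T}\Big)+o(1).
\]
The strategy is the usual Littlewood/Montgomery idea: choose $\log x$ so that \emph{every} term $\cos(\gamma\log x-\arg\rho)$ with $\gamma\leq T$ is simultaneously close to $1$, i.e. $\gamma\log x$ is close to $\arg\rho$ modulo $2\pi$ for all such $\gamma$. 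Then the main sum is at least $(2-\delta)\sum_{0<\gamma\leq T}\tfrac1{|\rho|}\sim \tfrac{1}{2\pi}(\log T)^2$, and optimising $T$ against the truncation error (which forces $T$ to grow like a small power of $x$, hence $\log T\asymp\log\log x$, giving $(\log T)^2\asymp(\log\log x)^2$ — wait, we need $(\log\log\log x)^2$, so in fact $T$ must be taken much smaller, of order $\log\log x$, making $\log T\asymp\log\log\log x$) yields the lower bound $\big(\tfrac{1}{2\pi}+o(1)\big)(\log\log\log x)^2$ for $-E(x)$ along a sequence, and symmetrically for $E(x)$.

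The heart of the argument, and the only place ELI enters, is the simultaneous Diophantine approximation step: we must show there exist arbitrarily large $y=\log x$ with $\|\gamma y/(2\pi)-\arg\rho/(2\pi)\|$ small for all $0<\gamma\leq T$ simultaneously, where $T=T(x)$ is allowed to grow slowly. The plan is to cover the torus $(\mathbb{R}/\mathbb{Z})^{N(T)}$ by small boxes: by Dirichlet's box principle (pigeonhole on the fractional parts of $(\gamma_1 y,\dots,\gamma_{N(T)}y)$ for $y=1,2,\dots,Q$), there is an integer $y\leq Q$ with all coordinates within $\eta$ of a target, provided $Q\geq \eta^{-N(T)}$, i.e. $\log Q \geq N(T)\log(1/\eta)$. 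To make the box-principle output a genuinely large $y$ rather than a small one, one passes to a difference $y_2-y_1$ of two near-coincidences, or iterates; this is standard. The role of ELI is to guarantee that these approximations are \emph{not accidentally trivial and give a new value of $E(x)$ each time}, and more importantly to control how large $\log x$ must be: the required $\log x$ is roughly $\exp\big(N(T)\log(1/\eta)\big)=\exp\big(O(T\log T\cdot\log(1/\eta))\big)$, so $T\asymp \log\log\log x$ up to the $\log(1/\eta)$ and $\log T$ factors. One must also check that the targets $\arg\rho/(2\pi)$ can be hit — since these are real numbers depending only on the zeros, not on $x$, this is just the statement that the orbit $\{(\gamma_j y)_j : y\in\mathbb{R}\}$ is equidistributed (dense) in the torus, which follows from LI (a consequence of ELI) via Weyl. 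The effective lower bound \eqref{ELI} is what converts this density statement into a \emph{quantitative} rate: it tells us that the orbit does not linger near any proper subtorus for too long, equivalently that the discrepancy of $\{(\gamma_j y)_j\}_{y\leq Q}$ decays, so that the pigeonhole $y$ can be found with $\log Q$ only as large as claimed rather than astronomically larger.

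Assembling: fix $\ep>0$, set $T$ large, let $\eta=\eta(T)\to 0$ slowly (say $\eta=1/T$), and use ELI to produce $y=\log x$ with $y\leq \exp(T^{1+\ep'})$ (for a slightly larger $\ep'$ absorbing the $\log$ factors) such that $\cos(\gamma y-\arg\rho)\geq 1-O(T\eta^2)=1-o(1)$ uniformly for $0<\gamma\leq T$; here the exponent $T^{1+\ep}$ in ELI is exactly tuned so that $\log\log x\asymp T^{1+\ep}$, hence $T\asymp(\log\log x)^{1/(1+\ep)}$ and $(\log T)^2\asymp \big(\tfrac{1}{1+\ep}\big)^2(\log\log\log x)^2\to(\log\log\log x)^2$ as $\ep\to0$. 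Plugging into the explicit formula, $-E(x)\geq (2-o(1))\sum_{0<\gamma\leq T}\tfrac1{|\rho|}-$error. Now $\sum_{0<\gamma\leq T}\tfrac1{|\rho|}=\sum_{0<\gamma\leq T}\tfrac1{\sqrt{1/4+\gamma^2}}=\tfrac1{2\pi}(\log T)^2+O(\log T)$ by partial summation against $N(t)\sim\tfrac{t}{2\pi}\log\tfrac{t}{2\pi}$, and the truncation error $x^{1/2}(\log xT)^2/T$ divided by $\sqrt x$ — i.e. the error in $E(x)$ — is $O((\log x)^2/T)$, which is $o((\log\log\log x)^2)$ since $T$ is a power of $\log\log x$ while $(\log x)^2$ is, unfortunately, much bigger. \emph{This is the real obstacle}: the naive truncated explicit formula error $\frac{x}{T}(\log xT)^2$ is useless when $T$ is only of size $\log\log x$. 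The resolution, as in Montgomery's and Littlewood's treatments, is not to truncate sharply but to use a smooth weight: replace $\psi(x)$ by a short average $\frac1h\int_x^{x+h}\psi(t)\,dt$ or apply the explicit formula with a Fejér-type kernel $K(u)$ whose Fourier transform is supported in $[-T,T]$ and which is nonnegative; then the zeros with $|\gamma|>T$ are killed exactly (no error term from them) at the cost of replacing $E(x)$ by a local average $\int E(x+u)\,\hat K(u)\,du$, and one shows this average is itself an $\Omega_+$ of the same order (since if $E$ were small on the whole averaging window the average would be small). So the corrected plan is: (i) pick the smooth kernel with parameter $T$; (ii) run ELI to align all $\gamma\leq T$; (iii) get the local average of $E$ to be $\geq(\tfrac1{2\pi}+o(1))(\log T)^2$; (iv) deduce $\limsup E(x)/(\log\log\log x)^2\geq \tfrac1{2\pi}$ by letting $\ep\to0$; (v) repeat with $x^{i\gamma}\mapsto -x^{i\gamma}$ (i.e. align near $\arg\rho+\pi$) for the $\liminf$. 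The main obstacle, to restate, is managing the tail of the explicit formula so that taking $T$ as small as $(\log\log x)^{1/(1+\ep)}$ is legitimate; everything else is a careful but routine combination of the box principle (powered by ELI) with partial summation.
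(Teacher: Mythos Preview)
Your overall plan—the explicit formula, phase alignment of the zeros $\gamma\leq T$, and a Fej\'er-type kernel to suppress the tail—is the right skeleton, but two of your steps do not go through as written.

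\textbf{The kernel step does not close.} You propose replacing $E(e^t)$ by the Fej\'er average $\int K_T(u)E(e^{t+u})\,du$, arguing that zeros with $\gamma>T$ are then ``killed exactly''. But to make this rigorous you must first substitute a truncated explicit formula $E(e^s)=F(s,Y)+O(1)$ valid for $s\in[1,X]$ with $Y=e^{2X}$, and then extend the $u$-integral to $\mathbb{R}$ to invoke $\widehat K$. The tail of that extension is controlled by $\int_{|u|>Z}K_T(u)|F(t+u,Y)|\,du\ll H(Y)/(TZ)\asymp X^2/(TZ)$, which forces $Z\gg X^2$; yet you also need $t+u\in[1,X]$, i.e.\ $Z<X$. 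These are incompatible when $T\asymp(\log\log x)^{1/(1+\ep)}$. The paper resolves this with a \emph{two-step} truncation: first apply the Fej\'er kernel with $Y=X$ (so $Z=(\log X)^2$ suffices, Lemma~\ref{LemFejer}), and then pass from $F(\cdot,X)$ to $F(\cdot,e^{2X})$ by a second-moment bound (Lemma~\ref{LemSecondMomentError}) showing the difference is small \emph{outside a set of measure $O(X^{1-\alpha})$}.

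\textbf{A single aligned point is not enough.} Because the second-moment step only controls an exceptional set of measure $O(X^{1-\alpha})$, one needs a set of $t$'s with $F(t,T)$ large whose measure \emph{exceeds} this. Your Kronecker/pigeonhole argument produces at best one $t$ (or a discrete sequence), which could all lie in the bad set. The paper instead proves (Propositions~\ref{ProDistributionSumZeros0} and \ref{ProDistributionSumZeros}) that the measure of $\{t\in[1,X]:F_p(t,T)\text{ large}\}$ is at least $X\exp(-cT\log T)$, by computing the Laplace transform $\frac{1}{X}\int_1^X e^{sF(t,T)}dt$ and matching it to the random model $\prod_\gamma I_0(s|r_\gamma|)$. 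This is where ELI is actually used: expanding $e^{sF}$ in moments up to order $N(T)$ produces off-diagonal integrals $\frac{1}{X}\int_1^X e^{i(\sum\ell_\gamma\gamma)t}dt$ with $|\ell_\gamma|\leq N(T)$, and ELI bounds these by $e^{T^{1+\ep}}/X$. Your proposed route (ELI $\Rightarrow$ discrepancy bound $\Rightarrow$ quantitative inhomogeneous Kronecker with $\log x\leq\exp(T^{1+\ep})$) is not justified; in dimension $N(T)\asymp T\log T$ the Erd\H os--Tur\'an--Koksma machinery requires frequency cutoffs far beyond what ELI controls, and the paper's Remark~\ref{MontgomeryMonach} explicitly warns that restricting the coefficients (as a naive Kronecker argument would) is insufficient.
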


\begin{rem}\label{MontgomeryMonach}
In a remark at the bottom of  page 483 of the book of Montgomery and Vaughan \cite{MoVa}, it is claimed that Montgomery and Monach proved Theorem \ref{ThmErrorPNT} above in Monach's thesis \cite{Monach}, under a slightly weaker assumption of the Effective LI conjecture (where the coefficients $\ell_{\gamma}$ are taken to be bounded). However, the author has examined Monach's thesis with great scrutiny, and did not find any such proof or ideas that can lead to a proof of such a result. Furthermore, our proof of Theorem \ref{ThmErrorPNT} relies on the ELI conjecture with coefficients up to $N(T)$, in order to show that the contribution of the small zeros in the explicit formula \eqref{ExplicitFormulaPsi} below can be very large with ``high probability". This is achieved by estimating large moments of such sums. In particular, restricting the coefficients to a bounded set in the ELI conjecture will only allow one to estimate bounded moments of such sums, and this will not be enough for our method to work.
\end{rem}


\subsection{The summatory function of the M\"obius function}
An equivalent formulation of the Riemann hypothesis is the bound 
\begin{equation}\label{EqMobiusRH}
M(x)\ll _{\ep}x^{1/2+\ep},
\end{equation}
for every fixed $\ep>0$.
Assuming RH, Landau showed that one can take $\ep=\log\log\log x/\log\log x$ in \eqref{EqMobiusRH}, and this was improved to $\ep=1/\log\log x$ by Titchmarsh \cite[Page 371]{Ti}. The best known bound for $M(x)$ under RH is due to Soundararajan \cite{So} who showed that 
$$M(x)\ll \sqrt{x} \exp\left((\log x)^{1/2+\ep}\right).
$$
This improved an earlier result of Maier and Montgomery \cite{MaMo} who obtained the exponent $39/61$ instead of $1/2$ in this result. 
On the other hand, the best unconditional omega result for $M(x)$ is 
\begin{equation}\label{EqOmegaM}
M(x)= \Omega_{\pm}(\sqrt{x}).
\end{equation}
Gonek (unpublished, see Ng \cite{Ng}) conjectured that there exists a positive constant $B$ such that 
\begin{equation}\label{EqConjectureGonekM}
\limsup_{x\to \infty} \frac{M(x)}{\sqrt{x} (\log\log \log x)^{5/4}}= B, \text { and } \liminf_{x\to \infty} \frac{M(x)}{\sqrt{x} (\log\log \log x)^{5/4}}= -B.
\end{equation}
In 2012 Ng (unpublished)\footnote{Private communication.} worked out the value of the constant $B$ 
and conjectured that \eqref{EqConjectureGonekM} holds with 
$$B = \frac{8}{5\sqrt{\pi}} e^{3\zeta'(-1)-\frac{11}{12}\log(2)} \prod_p\left(\left(1-p^{-1}\right)^{1/4}\sum_{k=0}^{\infty}\left(\frac{\Gamma(k-1/2)}{k!\Gamma(-1/2)}\right)^2p^{-k}\right)=0.26739....$$
Since our goal is to improve the omega result \eqref{EqOmegaM}, we might assume the Riemann hypothesis and the simplicity of the zeros of the zeta function\footnote{Currently we know that more than $2/5$ of the zeros of the zeta function are simple (see \cite{BCY}), and this proportion can be increased to $19/27$ assuming RH (see \cite{BuHe}).}, an assumption which is widely believed.  Indeed if RH is false and $\rho=\sigma+it$ is a non-trivial zero of $\zeta(s)$ with $\sigma>1/2,$ then one has 
\begin{equation}\label{EqOmegaRHFalseM}
M(x)=\Omega_{\pm}\left(x^{\sigma-\ep}\right),
\end{equation}
for any fixed $\ep$ (see \cite{InBook}). Moreover, if $\rho$ is a zero with multiplicity $m\geq 2$ then we have 
\begin{equation}\label{EqOmegaMultiplicity}
M(x)= \Omega_{\pm}\left(\sqrt{x}(\log x)^{m-1}\right).
\end{equation}
In particular, if RH were to be false, or $\zeta(s)$ happens to have a multiple zero, then we would trivially have
\begin{equation}\label{EqOmegaPredictedM}
M(x)=\Omega_{\pm} \left(\sqrt{x}(\log\log\log x)^{5/4}\right),
\end{equation}
which is predicted by Gonek's conjecture \eqref{EqConjectureGonekM}.

In order to gain more insight into the size of $M(x)$, one needs further information on negative moments of the derivative of the Riemann zeta function (see for example \cite[Theorem 14.27]{Ti}). Note that to define such sums one needs to assume that all zeros of the zeta function are simple, which we might do in view of our observation above.  For a real number $k$ we define 
$$ J_{-k}(T):= \sum_{0<\gamma\leq T} \frac{1}{|\zeta'(\rho)|^{2k}}.$$
Gonek \cite{Go} conjectured that $J_{-1}(T)\ll T$ and showed that this conjecture together with RH implies the improved bound
$$M(x) \ll \sqrt{x} (\log x)^{3/2},$$
see \cite[Theorem 1 (i)]{Ng}. Under the same assumptions, 
Ng \cite{Ng} showed that $M(x)/\sqrt{x}$ has a  logarithmic limiting distribution. He also obtained a similar result to Gallagher's bound \eqref{EqGallagher} for $M(x)$. More precisely, assuming RH and the bound $J_{-1}(T)\ll T$, Ng 
showed that 
$$M(x)\ll \sqrt{x} (\log\log x)^{3/2},$$
except on a set of finite logarithmic measure. 

The conjectured bound $J_{-1}(T)\ll T$ is a special case of a more general conjecture, formulated independently by Gonek \cite{Go} and Hejhal \cite{He}, which states that  
\begin{equation}\label{ConjectureGoHe}
J_{-k}(T) \asymp_k T (\log T)^{(k-1)^2},
\end{equation}
for all real numbers $k$. However, it is now believed that this conjecture only holds if $k\leq 3/2$. Using random matrix theory, Hughes,  Keating and O’Connell \cite{HKO} refined this conjecture to an asymptotic formula 
$$
J_{-k}(T) \sim \mathcal{G}_k \frac{T}{2\pi}\left(\log \frac{T}{2\pi}\right)^{(k-1)^2},
$$
for $k\leq 3/2$, where $\mathcal{G}_k$ is an explicit positive constant. Partial results towards the Gonek-Hejhal Conjecture \eqref{ConjectureGoHe} for $k\geq 0$ are only known in the case of the implicit lower bound. Indeed, assuming RH and the simplicity of zeros of the zeta function, Heap, Li and Zhao \cite{HLZ} proved that
\begin{equation}\label{EqHeapLiZ}
J_{-k}(T) \gg_k T (\log T)^{(k-1)^2},
\end{equation}
for all positive rationals $k$, and this was recently extended to all real numbers $k\geq 0$ by Gao and Zhao \cite{GaZh}. 

As a consequence of a more general result concerning certain sums over the zeros of the Riemann zeta function (see Theorem \ref{ThmMain} below), we prove the omega result \eqref{EqOmegaPredictedM}, assuming the Effective LI conjecture, the Gonek-Hejhal Conjecture \eqref{ConjectureGoHe} for $k=1/2$, and a weak form of this conjecture for $k=1$. 
\begin{cor}\label{CorMobius}
Assume the Effective LI conjecture, together with the bounds $J_{-1/2}(T)\ll T(\log T)^{1/4}$ and $J_{-1}(T)\ll T^{\theta}$ for some $1\leq \theta < 3-\sqrt{3}$. Then we have 
$$ M(x)=\Omega_{\pm} \left(\sqrt{x}(\log\log\log x)^{5/4}\right). $$
\end{cor}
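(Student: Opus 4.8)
The plan is to derive Corollary~\ref{CorMobius} from the general Theorem~\ref{ThmMain} by applying it to the sum over zeros in the explicit formula for $M(x)$. Assuming RH and the simplicity of the zeros, truncating the classical explicit formula for $M(x)$ at height $T$ and writing $\rho=\frac{1}{2}+i\gamma$, one has
\begin{equation*}
\frac{M(x)}{\sqrt{x}}=\sum_{0<\gamma\le T}r(\gamma)\cos\big(\gamma\log x-\theta_{\gamma}\big)+\mathcal{E}(x,T),
\end{equation*}
where $r(\gamma):=2/|\rho\,\zeta'(\rho)|$, $\theta_{\gamma}:=\arg(\rho\,\zeta'(\rho))$, and $\mathcal{E}(x,T)$ collects the contribution of the zeros above height $T$ together with the elementary lower-order terms. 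This is a sum over the zeros of exactly the shape handled by Theorem~\ref{ThmMain}, with real non-negative amplitudes $r(\gamma)$, so the task reduces to verifying the hypotheses of that theorem for this amplitude sequence and tracking the resulting exponents.

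The first input is the size of the ``aligned'' partial sum. Since the lowest zero has ordinate $\approx 14.13$ we have $|\rho|\asymp\gamma$ throughout, so a dyadic decomposition together with partial summation against $J_{-1/2}(t)=\sum_{0<\gamma\le t}|\zeta'(\rho)|^{-1}$ gives
\begin{equation*}
\sum_{0<\gamma\le T}r(\gamma)\ \asymp\ \sum_{0<\gamma\le T}\frac{1}{\gamma\,|\zeta'(\rho)|}\ \asymp\ \int_{2}^{T}\frac{(\log t)^{1/4}}{t}\,dt\ \asymp\ (\log T)^{5/4},
\end{equation*}
where the upper bound uses the hypothesis $J_{-1/2}(T)\ll T(\log T)^{1/4}$ and the matching lower bound is the unconditional estimate $J_{-1/2}(T)\gg T(\log T)^{1/4}$ of \eqref{EqHeapLiZ}, valid under RH and the simplicity of zeros. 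This is precisely what produces the exponent $5/4$ in the conclusion. The second input is the mean-square decay of the amplitudes beyond height $T$: partial summation against $J_{-1}(t)\ll t^{\theta}$ gives $\sum_{\gamma>T}r(\gamma)^2\asymp\sum_{\gamma>T}\gamma^{-2}|\zeta'(\rho)|^{-2}\ll T^{\theta-2}$, and this, together with Cauchy--Schwarz and the Riemann--von Mangoldt bound $N(2T)-N(T)\ll T\log T$, supplies the control on the tail term $\mathcal{E}(x,T)$ that Theorem~\ref{ThmMain} requires.

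Granting these hypotheses, Theorem~\ref{ThmMain} produces, under the Effective LI conjecture, arbitrarily large $x$ for which $\cos(\gamma\log x-\theta_{\gamma})\ge 1-\delta$ for all $0<\gamma\le T$, with $\delta$ a small absolute constant, so that $M(x)/\sqrt{x}\gg\sum_{0<\gamma\le T}r(\gamma)\gg(\log T)^{5/4}$ once $\mathcal{E}(x,T)$ is seen to be negligible against this quantity. The inequality \eqref{ELI} is what allows such an $x$ to be found with $\log x\le\exp(T^{1+\ep})$: this is a quantitative simultaneous Diophantine approximation estimate, with the lower bound $\big|\sum_{0<\gamma\le T}\ell_{\gamma}\gamma\big|\ge C_{\ep}e^{-T^{1+\ep}}$ of \eqref{ELI} bounding the size of $\log x$ furnished by a pigeonhole argument. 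Consequently $\log\log\log x\le(1+\ep)\log T+O(1)$, so that $(\log T)^{5/4}\gg(1+\ep)^{-5/4}(\log\log\log x)^{5/4}$; letting $T\to\infty$ along a suitable sequence and then $\ep\to 0$ yields $M(x)=\Omega_{+}\big(\sqrt{x}(\log\log\log x)^{5/4}\big)$, and choosing the target phases so that the cosines approach $-1$ instead gives the $\Omega_{-}$ statement.

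The main obstacle is making ``$\mathcal{E}(x,T)$ negligible'' precise. Retaining the divergent growth $(\log T)^{5/4}$ forbids any smoothing of $M(x)$ strong enough to render the sum over zeros absolutely convergent, so the contribution of the zeros above height $T$---whose phases $\gamma\log x$ are not pinned down by \eqref{ELI}---cannot be killed pointwise and must instead be controlled on average over $x$ in a short window where the low zeros remain aligned; balancing the truncation height, the length of this almost-orthogonality window, and the size of $x$ forced by \eqref{ELI} against the main term $(\log T)^{5/4}$ is exactly what leads to the condition $\theta<3-\sqrt{3}$. Once Theorem~\ref{ThmMain} is available, everything else is routine bookkeeping.
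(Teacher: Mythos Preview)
Your overall plan---verify the three Assumptions of Theorem~\ref{ThmMain} for the sequence $r_{\gamma}=1/(\rho\zeta'(\rho))$ with $A=5/4$ and then invoke that theorem---is exactly the paper's route. The verification of Assumption~1 via partial summation on $J_{-1/2}(T)\asymp T(\log T)^{1/4}$ (upper bound assumed, lower bound from \eqref{EqHeapLiZ}) matches the paper, and your use of $J_{-1}(T)\ll T^{\theta}$ for the square-sum condition corresponds to Assumption~3 since $\gamma^2|r_{\gamma}|^2\asymp |\zeta'(\rho)|^{-2}$.

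Two points of divergence are worth flagging. First, the paper does not introduce your truncation error $\mathcal{E}(x,T)$: it uses the explicit formula \eqref{ExplicitFormulaM}, where the sum already runs to height $X^2$ with an $O(1)$ remainder (this formula, due to Akbary--Ng--Shahabi, is where the bound $J_{-1}(T)\ll T^{\theta}$ with $\theta<3-\sqrt{3}$ actually enters). All the work of passing from the long sum down to small zeros is internal to the proof of Theorem~\ref{ThmMain} itself (via the Fej\'er kernel and a mean-square argument), so your last paragraph about balancing truncation against $\mathcal{E}(x,T)$ is not part of the corollary's proof. Second, your description of what Theorem~\ref{ThmMain} does---finding $x$ with $\cos(\gamma\log x-\theta_{\gamma})\ge 1-\delta$ for all $\gamma\le T$ via a Kronecker/pigeonhole argument---is not the paper's mechanism: the paper instead compares the moment generating function of $F(t,T)$ to that of the random model $\sum|r_{\gamma}|\cos\theta_{\gamma}$ (Proposition~\ref{ProLaplaceRandom}), using ELI to control the off-diagonal terms in large moments. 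This mischaracterization does not damage the corollary, since Theorem~\ref{ThmMain} is being used as a black box, but you should not claim it as the content of that theorem.

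Finally, you omit the (easy) verification of Assumption~2: one has $\sum_{0<\gamma\le T}\gamma|r_{\gamma}|\le J_{-1/2}(T)\ll T(\log T)^{1/4}=o\big(T(\log T)^{5/4}\big)$.
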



\subsection{General sums over the zeros of the zeta function}

By the explicit formula for the $\psi$ function (see  \cite[page 109]{Da}), we have
$$ \psi(x)= x-\sum_{|\gamma|\leq Y} \frac{x^{\rho}}{\rho}+O(\log x),$$
for $Y>x^{3/2}.$
Therefore, if we assume RH and pair each zero with its complex conjugate we deduce that for all $2\leq x\leq X$ 
\begin{equation}\label{ExplicitFormulaPsi}
\frac{\psi(x)-x}{\sqrt{x}}= - 2 \re\left(\sum_{0<\gamma\leq X^2} \frac{x^{i\gamma}}{\rho}\right)+O\left(\frac{\log x}{\sqrt{x}}\right).
\end{equation}

An analogous formula for the function $M(x)$ exists under the assumptions of RH and the conjectural bound $J_{-1}(T)\ll T^{\theta}$ for some $1\leq \theta < 3-\sqrt{3}$. Indeed, Akbary, Ng and Shahabi \cite[Eq. (4.18)]{ANS} showed that under these assumptions one has 
\begin{equation}\label{ExplicitFormulaM}
\frac{M(x)}{\sqrt{x}}=  2\re\left(\sum_{0<\gamma\leq X^2}\frac{x^{i\gamma}}{\rho\zeta'(\rho)}\right)+O(1),
\end{equation}
for all $2\leq x\leq X$.


We shall consider more general sums over the zeros, which are of the form 
$$ \Phi_{X, \mathbf{r}}(x)=\Phi_X(x):= \re\left(\sum_{0<\gamma \leq X} x^{i\gamma} r_{\gamma}\right),$$
where $\{r_{\gamma}\}_{\gamma>0}$ is a sequence of complex numbers satisfying the following assumptions:  
  \textbf{Assumption 1}:
  There exists a positive constant $A$ such that 
$$ 
H(T):=\sum_{0<\gamma\leq T} |r_{\gamma}|\asymp (\log T)^A.
$$ 
 
  \noindent \textbf{Assumption 2}:
Let $A$ be the constant in Assumption 1. As $T\to \infty$ we have 
 $$
L(T):= \sum_{0<\gamma\leq T} \gamma |r_{\gamma}|=o\big(T(\log T)^A\big).
$$
 
 \noindent \textbf{Assumption 3}: There exists a constant  $1\leq \theta <3-\sqrt{3}$ such that 
$$
\sum_{0<\gamma\leq T} \gamma^2 |r_{\gamma}|^2 \ll T^{\theta}.
$$
Our main result is the following theorem.

 \begin{thm}\label{ThmMain}
Assume the Effective LI conjecture. Let $\{r_{\gamma}\}_{\gamma>0}$ be  a sequence of complex numbers satisfying Assumptions 1-3. Let $X$ be large. There exist positive constants $C_1$ and $C_2$ such that 
\begin{equation}\label{MaxOmega}\max_{x\in [2, X]} \Phi_{X^2}(x) \geq C_1(\log\log\log X)^A.
\end{equation}
and 
\begin{equation}\label{MinOmega}\min_{x\in [2, X]} \Phi_{X^2}(x) \leq -C_2(\log\log\log X)^A.
\end{equation}
 \end{thm}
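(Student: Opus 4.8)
The plan is to reduce everything to a statement about the partial sum over the ``small'' zeros. Writing $x=e^u$, the quantity $\Phi_{X^2}(e^u)$ is a real‑valued almost periodic function of $u$, and I would split it as $\Phi_{X^2}(e^u)=\Phi_T(u)+R_X(u)$, where $\Phi_T(u):=\re\big(\sum_{0<\gamma\le T}e^{iu\gamma}r_\gamma\big)$ and $R_X(u):=\re\big(\sum_{T<\gamma\le X^2}e^{iu\gamma}r_\gamma\big)$, with the cut‑off chosen as $T=c_0(\log\log X)^{1/(1+\ep)}$ for a small fixed $\ep>0$ and a constant $c_0<1$; by Assumption~1 this gives $H(T)\asymp(\log T)^A\asymp(\log\log\log X)^A$, while $T^{1+\ep}\ll\log\log X$. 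The goal then becomes: exhibit $u\in[\log 2,\log X]$ at which $\Phi_T(u)$ is a large positive multiple of $H(T)$ and $R_X(u)$ is not too negative.

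The heart of the argument is a large‑deviation estimate for $\Phi_T$, obtained from ELI exactly via the Lang--Waldschmidt heuristic. With $U=\log X$ I would compute the high moments $\frac1U\int_{\log 2}^{U}\big(\Phi_T(u)\big)^{2k}\,du$. Expanding the $2k$‑th power and integrating term by term, the ``diagonal'' contribution --- the tuples $(\gamma_1,\dots,\gamma_{2k})$ with signs for which $\sum_j\pm\gamma_j=0$, which by the linear independence of the $\gamma$'s forces each $\gamma$ to occur equally often with $+$ and with $-$ --- equals precisely $\ex\big[\big(\sum_{0<\gamma\le T}|r_\gamma|\cos U_\gamma\big)^{2k}\big]$ with $U_\gamma$ i.i.d.\ uniform; this is the rigorous form of the equidistribution heuristic. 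Each remaining term carries an oscillation $e^{iu\sum_\gamma\ell_\gamma\gamma}$ with $0<\max_\gamma|\ell_\gamma|\le 2k$, and ELI bounds $|\sum_\gamma\ell_\gamma\gamma|\ge C_\ep e^{-T^{1+\ep}}$, so the total remainder is $\ll\big(2N(T)\max_{0<\gamma\le T}|r_\gamma|\big)^{2k}e^{T^{1+\ep}}/U$, which is negligible provided $2k\le N(T)$ and $\log\log X\gg T^{1+\ep}$ --- both true for our $T$. The crucial point is to take $k$ as large as $k\asymp N(T)$: only then is $\ex\big[\big(\sum|r_\gamma|\cos U_\gamma\big)^{2k}\big]$ comparable to $H(T)^{2k}$ up to a factor $e^{-O(N(T))}$ (here Assumptions~1--2 enter in the elementary estimation of this expectation from below), and one deduces that $\Phi_T(u)\ge c_1H(T)$ on a subset of $[\log 2,\log X]$ of measure $\gg U\,e^{-c'N(T)}$. (One may instead use an exponential moment $\frac1U\int e^{\lambda\Phi_T(u)}\,du$, expanded through the Fourier--Bessel series $e^{\lambda|r_\gamma|\cos\theta}=I_0(\lambda|r_\gamma|)+2\sum_{j\ge1}I_j(\lambda|r_\gamma|)\cos j\theta$ of each factor, with $\lambda\asymp T$; the role of ELI is identical.) This is the step I expect to be the main obstacle: making the moment asymptotic rigorous with an admissible error term while $k$ is allowed to grow like $N(T)$, and obtaining the matching lower bound for the ``Gaussian‑model'' moment, are the technical core of the proof --- and, as the authors' Remark explains, this is precisely why the bounded‑coefficient form of ELI does not suffice.

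It remains to handle the tail $R_X$. By Assumption~3 one has $\sum_{T<\gamma\le X^2}|r_\gamma|^2\ll T^{\theta-2}=o(1)$ (a summation by parts using $\sum_{0<\gamma\le V}\gamma^2|r_\gamma|^2\ll V^\theta$), and with some care about possibly close zeros --- handled by ELI together with a Gallagher‑type bound --- the off‑diagonal terms are controlled as well, so $R_X$ has small $L^2$‑norm on $[\log 2,\log X]$. Moreover, since by LI the phases $\{u\gamma\}_{\gamma>T}$ are rationally independent of $\{u\gamma\}_{\gamma\le T}$, the values of $R_X$ are quantitatively almost uncorrelated with the thin set on which $\Phi_T(u)\ge c_1H(T)$: the cross‑frequencies $\sum_{\gamma'\le T}\ell_{\gamma'}\gamma'+\gamma$ are again bounded away from $0$ by ELI. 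A second‑moment estimate for $R_X$ restricted to that good set then yields a point $u$ there with $|R_X(u)|<\tfrac12c_1H(T)$, and hence $\Phi_{X^2}(e^u)=\Phi_T(u)+R_X(u)\ge\tfrac12c_1H(T)\ge C_1(\log\log\log X)^A$, which is \eqref{MaxOmega}. This is where the precise exponent $\theta<3-\sqrt3$ of Assumption~3 is needed, the competition being between the exponential thinness ($\asymp e^{-c'N(T)}$) of the good set and the polynomial‑in‑$T$ smallness of the tail's variance.

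Finally, \eqref{MinOmega} follows formally: the sequence $\{-r_\gamma\}_{\gamma>0}$ also satisfies Assumptions~1--3 and changes $\Phi_{X^2}$ into $-\Phi_{X^2}$, so applying the bound \eqref{MaxOmega} already established to $\{-r_\gamma\}$ gives $\min_{x\in[2,X]}\Phi_{X^2}(x)\le-C_2(\log\log\log X)^A$.
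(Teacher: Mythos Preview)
Your large-deviation step for $\Phi_T$ is essentially the paper's Proposition~\ref{ProDistributionSumZeros0}: ELI with coefficients up to $N(T)$ lets one compute moments (or the Laplace transform) of $\Phi_T(u)$ over $u\in[\log 2,\log X]$ up to order $N(T)$, and hence show that the set $\mathcal A=\{u:\Phi_T(u)\ge c_1H(T)\}$ has measure $\gg U\,e^{-c'N(T)}$, where $U=\log X$. The symmetry argument for \eqref{MinOmega} is also fine.

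The gap is in your treatment of the tail $R_X$. The global second moment you invoke indeed gives $\frac{1}{U}\int_{\log 2}^{\log X}|R_X(u)|^2\,du\ll T^{-\alpha}$ for some $\alpha>0$ depending on $\theta$ (this is Lemma~\ref{LemmaAkbaryNg} with $X_1=T$). But $\mathcal A$ has measure only $\asymp U e^{-c'N(T)}$ with $N(T)\asymp T\log T$, so pigeonholing yields at best a point of $\mathcal A$ with $|R_X(u)|^2\ll T^{-\alpha}e^{c'N(T)}$, which diverges as $T\to\infty$. Your proposed remedy --- bounding the ``cross-frequencies'' $\sum_{\gamma'\le T}\ell_{\gamma'}\gamma'\pm\gamma$ via ELI so as to compute the second moment of $R_X$ restricted to $\mathcal A$ --- does not work: Conjecture~\ref{EffectiveLI} bounds from below only linear combinations of ordinates $\le T$, and once a tail ordinate $\gamma>T$ enters you would have to invoke ELI at height $\gamma$, possibly as large as $X^2$, giving the useless lower bound $e^{-\gamma^{1+\ep}}$. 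There is thus no mechanism in your sketch to decouple $R_X$ from the exponentially thin set $\mathcal A$, and the constraint $\theta<3-\sqrt3$ does not help here (it governs $\alpha$, not the exponential factor $e^{c'N(T)}$).

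The paper circumvents this by never comparing $\Phi_T$ and $R_X$ pointwise. Lemma~\ref{LemFejer} uses the Fej\'er kernel $K$ to write $F(t,T)=\int_{-Z}^{Z}\frac{T}{2\pi}K(Tu/2\pi)\,F(t+u,Y)\,du+O(L(T)/T)$ for any $Y\ge T$; by \eqref{Fejer} the smoothing kills all frequencies $>T$, and the error $L(T)/T=o((\log T)^A)$ is controlled by Assumption~2. Since $K\ge0$, the integral is at most $\max_{|u|\le Z}F(t+u,Y)$, so a large value of $F(t,T)$ forces a large nearby value of $F(\cdot,Y)$ for \emph{any} $Y$. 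One then takes an intermediate $Y$ of size $U$ rather than $T$: Lemma~\ref{LemSecondMomentError} shows that the $L^2$ error in passing from the Fej\'er integral with this intermediate $Y$ to the one with the full range is $\ll Y^{-\alpha}=U^{-\alpha}$, polynomially small in $U$. Since the good set has measure $\gg U\exp\big(-(\log U)^{1/2+o(1)}\big)$ (with, say, $T=(\log U)^{1/2}$), it vastly exceeds the bad set of measure $\ll U^{1-\alpha}$, and the argument closes. Your proposal is missing precisely this two-scale reduction together with the positivity-of-kernel trick that makes it possible.
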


\textbf{Acknowledgements}. We  would like to thank Greg Martin and Nathan Ng for fruitful discussions and for their comments on an earlier draft of this paper. The author is supported by a junior chair of the Institut Universitaire de France. Part of this work was completed while the author was on a D\'el\'egation CNRS at the IRL3457 CRM-CNRS
in Montr\'eal. He would like to thank the CNRS for its support and the Centre de
Recherches Math\'ematiques for its excellent working conditions. 


\section{Key ingredients and outline of the Proof of Theorem \ref{ThmMain}}
 Let $\beta_{\gamma}=\arg r_{\gamma}$. For a real number $T\geq 2$ we define
\begin{equation}\label{DefShortSumZeros}
F(t, T):= \Phi_{T}(e^t)=\sum_{0<\gamma\leq T} \cos(\gamma t+\beta_{\gamma})  |r_{\gamma}|, 
 \end{equation}
where we changed variables from $x$ to $e^t$. Thus we need to consider the maximal and minimal values of $F(t, e^{2X})$ for $t\in [1, X]$. 
 

The proof of Theorem \ref{ThmMain} consists in three different steps. Our first goal is to show that the sum over the small zeros (with $\gamma \leq (\log X)^{1-\varepsilon}$) can be very large for many points $t\in [1, X]$. To this end we use the Effective LI conjecture to prove the following result.
\begin{pro}\label{ProDistributionSumZeros0}
 Assume the Effective LI conjecture. Let $\{r_{\gamma}\}_{\gamma>0}$ be  a sequence of complex numbers satisfying Assumption 1. Let $\varepsilon>0$ be small and fixed.  There exist positive constants $c_1$ and $c_2$ such that for $X$ large and  all real numbers $2\leq T\leq (\log X)^{1-\varepsilon}$ we have
\begin{equation}\label{EqMeasureLargeF} 
\frac{1}{X}\textup{meas} \{t\in [1, X] : F(t, T)> c_1 (\log T)^A\} \gg e^{-c_2 T\log T} ,
\end{equation}
 where meas denotes the Lebesgue measure on $\mathbb{R}$. Furthermore, we also have   
 \begin{equation}\label{EqMeasureSmallF} \frac{1}{X}\textup{meas} \{t\in [1, X] : F(t, T)<- c_1 (\log T)^A\}\gg e^{-c_2 T\log T}.
 \end{equation}
\end{pro}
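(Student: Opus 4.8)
The plan is to bound from below the Lebesgue measure of $S:=\{t\in[1,X]:F(t,T)>c_1(\log T)^A\}$ by introducing a nonnegative weight $W$ on $[1,X]$ which is concentrated, in an averaged sense, on the points where $F(t,T)$ is within a constant factor of its maximal possible value $H(T)=\sum_{0<\gamma\le T}|r_\gamma|\asymp(\log T)^A$. One may assume $T$ is large. I would fix an \emph{absolute} integer $M$ (to be chosen) and take the Fej\'er kernel $K_M(\theta)=\sum_{|j|\le M}\bigl(1-\tfrac{|j|}{M+1}\bigr)e^{ij\theta}$, which is nonnegative with $\max K_M=K_M(0)=M+1$, $\tfrac1{2\pi}\int_0^{2\pi}K_M=1$ and $\tfrac1{2\pi}\int_0^{2\pi}\cos\theta\,K_M(\theta)\,d\theta=1-\tfrac1{M+1}$, and then set
\[
W(t):=\prod_{\substack{0<\gamma\le T\\ r_\gamma\ne 0}}K_M(\gamma t+\beta_\gamma)\ \ge\ 0 .
\]
This is the analytic analogue of the statement ``all the phases $\gamma t$ are independent and uniform''.

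The first and principal step is an \emph{effective equidistribution estimate} based on the Effective LI conjecture. Expanding the product, $W(t)=\sum_{\mathbf j}c_{\mathbf j}\,e^{it\sum_\gamma j_\gamma\gamma}$, where $\mathbf j=(j_\gamma)$ runs over integer vectors with $|j_\gamma|\le M$ and $|c_{\mathbf j}|\le 1$. When $\mathbf j\ne\mathbf 0$, the frequency $\sum_\gamma j_\gamma\gamma$ is a nonzero integer combination of ordinates of zeros with coefficients bounded by $M\le N(T)$, so ELI (applied with a small parameter $\eta=\eta(\varepsilon)>0$) gives $\bigl|\sum_\gamma j_\gamma\gamma\bigr|\ge C_\eta e^{-T^{1+\eta}}$, hence $\bigl|\tfrac1X\int_1^X e^{it\sum_\gamma j_\gamma\gamma}\,dt\bigr|\le \tfrac{2}{XC_\eta}e^{T^{1+\eta}}$. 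There are at most $(2M+1)^{N(T)}=e^{O(T\log T)}$ vectors $\mathbf j$, and in the range $2\le T\le(\log X)^{1-\varepsilon}$ one has both $T^{1+\eta}=o(\log X)$ (as soon as $\eta<\varepsilon/(1-\varepsilon)$) and $T\log T=o(\log X)$, so the total contribution of the terms $\mathbf j\ne\mathbf 0$ is $X^{-1+o(1)}=o(1)$. This yields $\tfrac1X\int_1^X W(t)\,dt=1+o(1)$; the same expansion applied to $(1-\cos(\gamma_0 t+\beta_{\gamma_0}))W(t)$ (whose frequencies have coefficients $\le M+1$) gives, uniformly in $\gamma_0$, $\tfrac1X\int_1^X (1-\cos(\gamma_0 t+\beta_{\gamma_0}))W(t)\,dt=\tfrac1{M+1}+O(X^{-1+o(1)})$. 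Multiplying by $|r_{\gamma_0}|$, summing over $\gamma_0$, and using $H(T)-F(t,T)=\sum_\gamma|r_\gamma|(1-\cos(\gamma t+\beta_\gamma))$, I obtain $\tfrac1X\int_1^X W(t)\bigl(H(T)-F(t,T)\bigr)\,dt=\tfrac{H(T)}{M+1}\bigl(1+o(1)\bigr)$.

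Next I would choose $c_1=c_H/2$, where $H(T)\ge c_H(\log T)^A$ by Assumption 1. On $[1,X]\setminus S$ one has $H(T)-F(t,T)\ge H(T)-c_1(\log T)^A\ge\tfrac12H(T)>0$, so, since $W\ge 0$,
\[
\int_{[1,X]\setminus S}W(t)\,dt\le\frac{2}{H(T)}\int_1^X W(t)\bigl(H(T)-F(t,T)\bigr)\,dt=\frac{2X}{M+1}\bigl(1+o(1)\bigr).
\]
Fixing $M$ with $M+1\ge 6$, the right-hand side is $\le X/2\le\tfrac12\int_1^X W(t)\,dt$ for $X$ large, whence $\int_S W(t)\,dt\ge X/4$. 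Since $W\le(M+1)^{N(T)}$ pointwise,
\[
\textup{meas}(S)\ \ge\ \frac{\int_S W(t)\,dt}{\max_{[1,X]}W}\ \ge\ \frac{X}{4}\,e^{-N(T)\log(M+1)}\ \gg\ X\,e^{-c_2 T\log T}
\]
by the counting asymptotic $N(T)=\tfrac{T}{2\pi}\log T\,(1+o(1))$ and any choice $c_2>\tfrac{\log(M+1)}{2\pi}$, which is \eqref{EqMeasureLargeF}. The bound \eqref{EqMeasureSmallF} then follows by repeating the argument verbatim with every $\beta_\gamma$ replaced by $\beta_\gamma+\pi$, i.e.\ with $F$ replaced by $-F$.

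The crux of the argument — and the step I expect to require the most care — is the effective equidistribution estimate, where one must play off the two competing exponential sizes $e^{T^{1+\eta}}$ (the ELI lower bound on the frequencies) and $(2M+1)^{N(T)}=e^{O(T\log T)}$ (the number of frequencies) against the saving $1/X$ coming from the length of the range of integration. This is exactly why the hypothesis $T\le(\log X)^{1-\varepsilon}$ is imposed, and why one genuinely needs ELI with exponent $1+\eta$: a version restricted to bounded coefficients would only control trigonometric polynomials of bounded degree, not a product of $N(T)$ Fej\'er kernels. A secondary but essential point is that $M$ must be kept an absolute constant, so that the resulting $c_2$ is a genuine constant; this is what dictates the use of the Fej\'er kernel, whose first Fourier coefficient $1-\tfrac1{M+1}$ produces the clean factor $1/(M+1)$ above, rather than a more sharply concentrated kernel.
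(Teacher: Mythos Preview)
Your proof is correct and follows a genuinely different route from the paper's. The paper proceeds via the Laplace transform: under ELI it shows (Proposition~\ref{ProLaplaceRandom}) that $\tfrac{1}{X}\int_1^X e^{sF(t,T)}\,dt$ agrees with the moment generating function of the random model $\sum_\gamma |r_\gamma|\cos\theta_\gamma$ (a product of modified Bessel functions $I_0$) for $|s|\le c_0 T(\log T)^{1-A}$, and then extracts the measure bound by playing this off against the pointwise upper bound $F\le c_4(\log T)^A$. Your argument replaces the exponential weight $e^{sF}$ by the product of Fej\'er kernels $W=\prod_\gamma K_M(\gamma t+\beta_\gamma)$ with fixed degree $M$; this is more direct, since the two averages you need, $\int W$ and $\int W\,(H-F)$, are read off immediately from the Fourier side without any truncation of an exponential series or Bessel-function estimates. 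A noteworthy byproduct: because your frequencies carry coefficients $|j_\gamma|\le M+1$ (an absolute constant), your argument actually only invokes the weaker form of ELI alluded to in Remark~\ref{MontgomeryMonach} (bounded coefficients, arbitrarily many zeros up to $T$), contrary to what your final paragraph suggests --- the obstruction described there is specific to the paper's moment-based route, which genuinely needs coefficients up to $N(T)$ to handle the $k$-th moment for $k$ as large as $N(T)$. On the other hand, the Laplace-transform machinery yields the full exponential-moment asymptotic, which is what underpins the refined Proposition~\ref{ProDistributionSumZeros} and hence the precise constant $1/(2\pi)$ in Theorem~\ref{ThmErrorPNT}; your approach, as it stands, delivers only qualitative constants $c_1,c_2$.
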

In order to prove Theorem \ref{ThmMain} we need to shorten the sum $F(t, e^{2X})$ so that, roughly speaking,  we only keep the contribution of the very small zeros $\gamma\leq (\log X)^{1-\varepsilon}$. We do this in two steps. First, we achieve this for shorter sums $F(t, Y)$,  with $Y$ a power of $X$, using the Fejer kernel  
$$K(u):= \left(\frac{\sin(\pi u)}{\pi u}\right)^2.$$
Recall that
\begin{equation}\label{Fejer}\int_{-\infty}^{\infty} K(u) e^{2\pi i t u} du= \max(0, 1-|t|)=: k(t).
\end{equation}
We prove the following key lemma. 
\begin{lem}\label{LemFejer}
Let $\{r_{\gamma}\}_{\gamma>0}$ be a sequence of complex numbers satisfying Assumption 1. Let $Y\geq T\geq 2$ be real numbers. For any real number $Z\geq (\log Y)^A$ we have 
\begin{equation}\label{FejerMain}
F(t, T) = \int_{-Z}^{Z} \frac{T}{2\pi} K\left(\frac{Tu}{2\pi}\right) F(t+u, Y) du +O\left(\frac{L(T)}{T} \right).
\end{equation}
\end{lem}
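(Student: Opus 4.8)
The plan is to recognise the integral on the right of \eqref{FejerMain}, once extended from $[-Z,Z]$ to all of $\mathbb{R}$, as a Fejér average of $F(\cdot,Y)$ that reproduces $F(t,T)$ up to a harmless smoothing error. Writing $r_\gamma=|r_\gamma|e^{i\beta_\gamma}$ we have $F(t+u,Y)=\re\sum_{0<\gamma\le Y}r_\gamma e^{i\gamma t}e^{i\gamma u}$, a \emph{finite} sum, so we may integrate term by term. For each fixed $\gamma>0$, the substitution $w=Tu/(2\pi)$ and \eqref{Fejer} give
\[
\int_{-\infty}^{\infty}\frac{T}{2\pi}K\!\left(\frac{Tu}{2\pi}\right)e^{i\gamma u}\,du=\int_{-\infty}^{\infty}K(w)e^{2\pi i w(\gamma/T)}\,dw=k(\gamma/T)=\max\!\Bigl(0,\,1-\tfrac{\gamma}{T}\Bigr),
\]
which vanishes for $\gamma>T$ and equals $1-\gamma/T$ for $0<\gamma\le T$ (here $Y\ge T$ guarantees that every zero contributing to $F(t,T)$ is present in $F(t+u,Y)$). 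Summing over $\gamma$ therefore yields
\[
\int_{-\infty}^{\infty}\frac{T}{2\pi}K\!\left(\frac{Tu}{2\pi}\right)F(t+u,Y)\,du=\re\sum_{0<\gamma\le T}r_\gamma e^{i\gamma t}\Bigl(1-\tfrac{\gamma}{T}\Bigr)=F(t,T)-\re\sum_{0<\gamma\le T}r_\gamma e^{i\gamma t}\tfrac{\gamma}{T},
\]
and the last sum is $O\!\bigl(\tfrac1T\sum_{0<\gamma\le T}\gamma|r_\gamma|\bigr)=O(L(T)/T)$, which is the error term in \eqref{FejerMain}.

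It then remains to show that restricting the integral back to $[-Z,Z]$ introduces an error of size $O(1/T)$ (hence at most $O(L(T)/T)$ in the stated form). For this I would use the pointwise bound $K(w)\le\min\{1,(\pi w)^{-2}\}$, which gives $\frac{T}{2\pi}K\!\left(\frac{Tu}{2\pi}\right)\le\frac{2}{\pi T u^{2}}$, together with the trivial estimate $|F(t+u,Y)|\le\sum_{0<\gamma\le Y}|r_\gamma|=H(Y)$. Thus
\[
\Bigl|\int_{|u|>Z}\frac{T}{2\pi}K\!\left(\frac{Tu}{2\pi}\right)F(t+u,Y)\,du\Bigr|\le H(Y)\int_{|u|>Z}\frac{2}{\pi T u^{2}}\,du=\frac{4H(Y)}{\pi T Z},
\]
and since Assumption 1 gives $H(Y)\ll(\log Y)^{A}$ while $Z\ge(\log Y)^{A}$, this is $\ll 1/T$. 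Adding up the three contributions — the main term $F(t,T)$, the Fejér-weight correction $O(L(T)/T)$, and the truncation error $O(1/T)$ — proves the lemma.

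There is no serious obstacle here: the only points needing care are the exact evaluation of the Fourier integral, so that the cutoff at $\gamma=T$ is clean and one sees precisely that replacing the sharp indicator $\mathbf{1}_{\{\gamma\le T\}}$ by the Fejér weight $1-\gamma/T$ costs exactly $L(T)/T$ (this is where the quantity $L(T)$, defined in Assumption 2, enters), and the comparison of the hypothesis $Z\ge(\log Y)^{A}$ with Assumption 1 to make the tail negligible. Everything else is linearity of a finite sum together with the standard quadratic decay of the Fejér kernel.
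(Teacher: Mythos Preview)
Your proof is correct and follows essentially the same route as the paper: extend the integral to $\mathbb{R}$, compute it term by term via \eqref{Fejer} to obtain $F(t,T)+O(L(T)/T)$, and then bound the tail $\int_{|u|>Z}$ using $K(u)\le(\pi u)^{-2}$ together with $|F(t+u,Y)|\le H(Y)\ll(\log Y)^A$ and the hypothesis $Z\ge(\log Y)^A$. The only difference is cosmetic (you track explicit constants where the paper uses $\ll$), and your parenthetical absorption of the $O(1/T)$ tail into $O(L(T)/T)$ matches exactly what the paper does.
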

If we take $Y= e^{2X}$ in this result, then we must have $Z \geq X^A$ which causes the shifts in $F(t+u, Y)$ to be very large if $A\geq 1$ (which is the case for the error term in the prime number theorem and the summatory function of the M\"obius function). To overcome this problem, we show that the difference between the integral on the right hand side of \eqref{FejerMain} with $Y=X$ and the same integral with $Y=e^{2X}$ is small on average over $t\in [1, X]$. To this end we define 
$$\ep(t, T_1, T_2):= \sum_{T_1 <\gamma \leq T_2} e^{i\gamma t} r_{\gamma}.$$
Our third and last step of the argument is to prove the following lemma.
\begin{lem}\label{LemSecondMomentError}
Let $\{r_{\gamma}\}_{\gamma>0}$ be a sequence of complex numbers satisfying Assumption 3. Let $X, Y\geq 2$ be real numbers.  For any positive real numbers $h, Z$ we have 
 $$ \frac{1}{X}\int_1^{X} \left|\int_{-Z}^Z h K(hu) \ep(t+u, Y, e^{2X}) du\right|^2dt \ll Y^{-\alpha},$$
 where the implicit constant is absolute. 
 \end{lem}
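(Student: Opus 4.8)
The plan is to open the square and reduce the second moment to a diagonal sum plus an off-diagonal sum over pairs of zeros, after first isolating the decay of the weight that the truncated Fej\'er average attaches to each zero.

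The first step is to linearise. Since $\ep(t+u,Y,e^{2X})=\sum_{Y<\gamma\le e^{2X}}e^{i\gamma(t+u)}r_\gamma$ is a finite sum, the inner integral equals $\sum_{Y<\gamma\le e^{2X}}r_\gamma e^{i\gamma t}\,w_\gamma$, where $w_\gamma:=\int_{-Z}^{Z}hK(hu)e^{i\gamma u}\,du$. The crucial point is that $w_\gamma$ is very small. Indeed, by \eqref{Fejer} and the substitution $v=hu$, the \emph{complete} integral $\int_{-\infty}^{\infty}hK(hu)e^{i\gamma u}\,du$ equals $k(\gamma/(2\pi h))=\max(0,1-\gamma/(2\pi h))$, which vanishes as soon as $\gamma\ge 2\pi h$. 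We may assume $2\pi h\le Y$, which is the only case used in the paper (in Lemma~\ref{LemFejer} one has $2\pi h=T\le Y$); then every $\gamma$ in the sum satisfies $\gamma>2\pi h$, so $w_\gamma$ is the pure tail $-\int_{|u|\ge Z}hK(hu)e^{i\gamma u}\,du$. From this I would extract two bounds: crudely, using $K(hu)\le(\pi hu)^{-2}$, one gets $|w_\gamma|\le\int_{|u|\ge Z}hK(hu)\,du\ll(hZ)^{-1}$; and integrating by parts once, using $K\le 1$ and $\int_{0}^{\infty}|K'(v)|\,dv<\infty$, one gets $|w_\gamma|\ll h/\gamma$. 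Multiplying these yields the key estimate $|w_\gamma|^{2}\ll(\gamma Z)^{-1}$.

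Next I would expand $|\cdot|^{2}$ and integrate termwise: $\frac1X\int_1^X e^{i(\gamma-\gamma')t}\,dt$ equals $1$ on the diagonal and is $O\!\big(\min(1,(X|\gamma-\gamma'|)^{-1})\big)$ off it. The diagonal contributes $\sum_{Y<\gamma\le e^{2X}}|r_\gamma|^{2}|w_\gamma|^{2}$, which by the bound on $w_\gamma$ and a dyadic decomposition of $(Y,e^{2X}]$ fed into Assumption~3 is $\ll Z^{-1}Y^{\theta-3}$ (the geometric series in the dyadic parameter converges since $\theta<3$). For the off-diagonal sum I would use $|r_\gamma||w_\gamma|\cdot|r_{\gamma'}||w_{\gamma'}|\le\frac{1}{2}\big(|r_\gamma|^{2}|w_\gamma|^{2}+|r_{\gamma'}|^{2}|w_{\gamma'}|^{2}\big)$ together with symmetry to bound it by $\big(\sum_\gamma|r_\gamma|^{2}|w_\gamma|^{2}\big)\cdot\sup_\gamma\sum_{\gamma'\ne\gamma,\,\gamma'\le e^{2X}}\min(1,(X|\gamma-\gamma'|)^{-1})$, and this last supremum is $\ll X$: the zeros with $|\gamma-\gamma'|<1$ number $\ll\log(e^{2X})\ll X$ by the clustering estimate $N(T+1)-N(T)\ll\log T$, while the dyadic ranges $2^{j-1}<|\gamma-\gamma'|\le 2^{j}$ with $1\le j\ll X$ each contribute $O(1)$. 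Hence the whole second moment is $\ll X\,Z^{-1}Y^{\theta-3}$, which in the relevant range $Y=X$, $Z\ge1$ is $\ll Y^{\theta-2}=Y^{-\alpha}$ with $\alpha=2-\theta>0$ (recall $\theta<3-\sqrt3<2$).

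The main obstacle is the control of $w_\gamma$ in the first step. The trivial bound $|w_\gamma|\le1$ would only give $\ll XY^{\theta-2}$ for the second moment, i.e.\ $\ll X^{\theta-1}$ when $Y=X$, which (as $\theta\ge1$) does not decay; the culprit is the factor $X$ produced by the off-diagonal, a factor that is genuinely there since the zeros run up to height $e^{2X}$, and a Cauchy--Schwarz applied to the $u$-integral would throw away precisely the cancellation in $w_\gamma$ on which everything rests. One therefore has to see that $w_\gamma$ is a pure tail integral, because the complete Fej\'er average annihilates every zero above its cutoff, and then balance the crude size bound against the integration-by-parts bound to extract the decay $|w_\gamma|^{2}\ll(\gamma Z)^{-1}$, which is exactly what converts the growing estimate into a decaying one. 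The off-diagonal count and the dyadic summation against Assumption~3 are routine by comparison.
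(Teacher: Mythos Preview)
Your argument is essentially correct in the parameter range actually used later ($Y=X$, $2\pi h=T\le Y$, $Z=(\log X)^{A}$), and it would suffice for the proof of Theorem~\ref{ThmMain}. It does not, however, prove Lemma~\ref{LemSecondMomentError} as stated: the lemma asserts a bound uniform in $h$ and $Z$ and with no assumed relation between $X$ and $Y$, whereas your final estimate $\ll X\,Z^{-1}Y^{\theta-3}$ depends on all three, a restriction you invoke explicitly at the end.

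The paper's route is both simpler and fully general, and the comparison is instructive. Rather than first collapsing the $u$-integral into the weights $w_\gamma$, one keeps the square as a double integral in $u,v$ and bounds the inner correlation
\[
\frac{1}{X}\int_1^X\ep(t+u,Y,e^{2X})\,\overline{\ep(t+v,Y,e^{2X})}\,dt
\]
uniformly in $u,v$ by $\sum_{Y<\gamma_1,\gamma_2\le e^{2X}}|r_{\gamma_1}r_{\gamma_2}|\min\bigl(1,|\gamma_1-\gamma_2|^{-1}\bigr)\ll Y^{-\alpha}$, quoting the Akbary--Ng--Shahabi estimate (Lemma~\ref{LemmaAkbaryNg}). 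The outer Fej\'er integrals then contribute at most $1$ for \emph{any} $h,Z$. In particular, the ``main obstacle'' you identify is not intrinsic to the problem: it is created by your AM--GM step, which discards the near-orthogonality encoded in $\min(1,|\gamma_1-\gamma_2|^{-1})$ and costs a factor of $X$. With Lemma~\ref{LemmaAkbaryNg} in hand, the trivial bound $|w_\gamma|\le 1$ already gives the lemma immediately. What your approach buys is independence from that black box, at the price of a weaker conclusion and a considerably longer argument.
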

 We now show how to deduce Theorem \ref{ThmMain} from Proposition \ref{ProDistributionSumZeros0} and Lemmas \ref{LemFejer} and \ref{LemSecondMomentError}. 
 \begin{proof}[Proof of Theorem \ref{ThmMain}]
 Let $Z=(\log X)^A$ and define $\mathcal{E}(X)$ to be the set of points $t\in  [\sqrt{X}, X-\sqrt{X}]$  such that 
$$ \left|\int_{-Z}^Z \frac{T}{2\pi} K\left(\frac{Tu}{2\pi}\right) 
\big(F(t+u, e^{2X})-F(t+u, X)\big) du\right|>1.$$
Then, it follows from Lemma \ref{LemSecondMomentError} that 
\begin{equation}\label{MeasureEpsilon}
\begin{aligned}\text{meas}(\mathcal{E}(X)) &\leq \int_1^X \left|\int_{-Z}^Z \frac{T}{2\pi} K\left(\frac{Tu}{2\pi}\right) \big(F(t+u, e^{2X})-F(t+u, X)\big) du\right|^2dt\\
& \leq \int_1^X \left|\int_{-Z}^Z \frac{T}{2\pi} K\left(\frac{Tu}{2\pi}\right) \ep(t+u, X, e^{2X}) du\right|^2dt \ll X^{1-\alpha}.
\end{aligned}
\end{equation}
Let  $T=(\log X)^{1/2}$. Let $c_1$ be the constant in Proposition \ref{ProDistributionSumZeros0} and $\mathcal{A}(X)$ be the set of points $t\in  [\sqrt{X}, X-\sqrt{X}]$ such that $F(t, T)\geq c_1 (\log T)^A$.  Combining the estimate \eqref{MeasureEpsilon} with Proposition \ref{ProDistributionSumZeros0} we get 
$$
\text{meas}\big(\mathcal{A}(X)\setminus \mathcal{E}(X)\big) \geq X \exp\left(-\frac{\log X}{\log\log X}\right).
$$
We now let $t\in \mathcal{A}(X)\setminus \mathcal{E}(X)$. Then it follows from Lemma \ref{LemFejer}, Assumption 2 and our definition of the set $\mathcal{A}(X)$  that 
\begin{equation}\label{LBIntFejerF}
\int_{-Z}^{Z} \frac{T}{2\pi} K\left(\frac{Tu}{2\pi}\right) F(t+u, X) du\geq \frac{c_1}{2} (\log T)^A.
\end{equation}
Moreover, since $t\notin \mathcal{E}(X)$ then 
\begin{equation}\label{LBIntFejerF2}
\begin{aligned}
\int_{-Z}^{Z} \frac{T}{2\pi} K\left(\frac{Tu}{2\pi}\right) F(t+u, e^{2X}) du 
& \geq \int_{-Z}^{Z} \frac{T}{2\pi} K\left(\frac{Tu}{2\pi}\right) F(t+u, X) du -1\\
& \geq \frac{c_1}{2} (\log T)^A-1,
\end{aligned}
\end{equation}
by \eqref{LBIntFejerF}. On the other hand since $K(v)$ is positive for all $v$, we get 
\begin{equation}\label{BoundFejerIntegral}
\begin{aligned}\int_{-Z}^{Z} \frac{T}{2\pi} K\left(\frac{Tu}{2\pi}\right) F(t+u, e^{2X}) du\leq \max_{|u|\leq Z} F(t+u, e^{2X})\int_{-Z}^{Z} \frac{T}{2\pi} K\left(\frac{Tu}{2\pi}\right)du \leq \max_{|u|\leq Z} F(t+u, e^{2X})
\end{aligned}
\end{equation}
 since  
\begin{equation}\label{LaplaceFejer0}
\int_{-Z}^{Z} h K\left(hu\right)du
\leq  \int_{-\infty}^{\infty} hK\left(hu\right)du 
= \int_{-\infty}^{\infty}  K\left(v\right)du= k(0)= 1,
\end{equation}
for every positive real number $h$, by using the change of variable $v= hu$. Combining the bounds \eqref{LBIntFejerF2} and \eqref{BoundFejerIntegral} implies the existence of $y\in [1, X]$ such that 
$$ F(y, e^{2X}) \gg (\log\log X)^A.$$
This establishes \eqref{MaxOmega}.
To prove \eqref{MinOmega} 
we let $\mathcal{A}'(X)$ be the set of points $t\in [\sqrt{X}, X-\sqrt{X}]$ such $F(t, T)\leq -c_1 (\log T)^A$.
Now, if $t\in \mathcal{A}'(X)\setminus{\mathcal{E}(X)}$ then similarly to \eqref{LBIntFejerF2} one has 
$$
\int_{-Z}^{Z} \frac{T}{2\pi} K\left(\frac{Tu}{2\pi}\right) F(t+u, e^{2X}) du \leq -\frac{c_1}{2} (\log T)^A+1.
$$
To conclude we observe that
\begin{equation}\label{TruncFejer}
\begin{aligned}
\int_{-Z}^{Z} \frac{T}{2\pi} K\left(\frac{Tu}{2\pi}\right) F(t+u, e^{2X}) du &\geq \min_{|u|\leq Z} F(t+u, e^{2X})\int_{-Z}^{Z} \frac{T}{2\pi} K\left(\frac{Tu}{2\pi}\right)du \\
& \geq \min_{|u|\leq Z} F(t+u, e^{2X}) \left(1+O\left(\frac{1}{(\log X)^A}\right)\right),
\end{aligned}
\end{equation}
which follows from the estimate
$$
\int_{-Z}^{Z} \frac{T}{2\pi} K\left(\frac{Tu}{2\pi}\right)du
= \int_{-\infty}^{\infty} \frac{T}{2\pi} K\left(\frac{Tu}{2\pi}\right)du + O\left(\frac{1}{TZ}\right)=  1+ O\left(\frac{1}{ZT}\right),
$$
since $K(u)\leq 1/(\pi u)^2$ for $u\neq 0. $
 \end{proof}
We end this section by deducing Corollary \ref{CorMobius} for the summatory function of the M\"obius function. 
\begin{proof}[Proof of Corollary \ref{CorMobius}]
First, in view of \eqref{EqOmegaRHFalseM} and \eqref{EqOmegaMultiplicity} we might assume RH and the simplicity of zeros of the zeta function. Moreover, by the explicit formula \eqref{ExplicitFormulaM} 
and Theorem \ref{ThmMain} one only needs to verify that the sequence $\{r_{\gamma}\}_{\gamma>0}=\{(\rho \zeta'(\rho))^{-1}\}_{\gamma>0}$ verifies Assumptions 1, 2 and 3, with parameter $A=5/4.$ By \eqref{EqHeapLiZ} and our assumption we have
$$J_{-1/2}(T)=\sum_{0<\gamma\leq T} \frac{1}{|\zeta'(\rho)|}\asymp T(\log T)^{1/4}.$$
Hence, by partial summation we deduce that 
$$ \sum_{0<\gamma\leq T} \frac{1}{|\rho\zeta'(\rho)|}\asymp (\log T)^{5/4},  $$
which corresponds to Assumption 1 with $A=5/4$. Moreover, one has 
$$ \sum_{0<\gamma\leq T} \frac{\gamma}{|\rho\zeta'(\rho)|}\leq J_{-1/2}(T) \ll T (\log T)^{1/4}, $$
implying Assumption 2. Finally, Assumption 3 follows from the bound $J_{-1}(T)\ll T^{\theta}$.

\end{proof}
\section{The contribution of small zeros : Proof of Proposition \ref{ProDistributionSumZeros0}}

In order to detect large values of $F(t, T)$, a classical approach consists in estimating its large moments. However, it turns out that a more efficient way is to use the moment generating function (or Laplace transform) of $F(t, T)$. The reason is that, heuristically, the distribution of $F(t, T)$ should be close to that of the sum of random variables $\sum_{0<\gamma\leq T} |r_{\gamma}|\cos(\theta_{\gamma})$, where $\{\theta_{\gamma}\}_{\gamma>0}$ is a sequence of I. I. D. random variables indexed by the $\gamma$'s and  uniformly distributed on $[-\pi, \pi]$. Furthermore, it is easier to estimate the moment generating function of this sum rather than its moments, since the former  equals the product over the $\gamma$'s of the moment generating functions of $\cos(\theta_{\gamma})|r_{\gamma}|$, by the independence of the random variables $\{\theta_{\gamma}\}_{\gamma>0}$.

Using the Effective LI conjecture, we prove that as $t$ varies in $[1, X]$, the moment generating function of $F(t, T)$ is close to that of its corresponding probabilistic random model in a large range, if $T\leq (\log X)^{1-\varepsilon}$.  
\begin{pro}\label{ProLaplaceRandom}
Assume the Effective LI conjecture. Let $\{r_{\gamma}\}_{\gamma>0}$ be a sequence of complex numbers satisfying Assumption 1. Let $\varepsilon>0$ be small and fixed. Let $X$ be large and $2\leq T\leq (\log X)^{1-\varepsilon}$ be a real number. There exists a positive constant $c_0$ such that for all real numbers $s$ with $|s|\leq c_0 T(\log T)^{1-A} $ we have 
$$ 
\frac{1}{X}\int_2^X \exp\big(s F(t, T) \big)dt= \ex\left(\exp\left(s\sum_{0<\gamma \leq T}  |r_{\gamma}|\cos(\theta_{\gamma})\right)\right)+ O\left(e^{-N(T)}\right).
$$
\end{pro}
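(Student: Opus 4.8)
The plan is to compare the exponential moment $\frac{1}{X}\int_2^X \exp(sF(t,T))\,dt$ with the product of one-dimensional integrals coming from the probabilistic model, and to quantify the error using the Effective LI conjecture. Expanding $\exp(sF(t,T))$ as a power series in $s$, each term is a polynomial in the quantities $e^{i\gamma t}$, and after multiplying out $F(t,T)^m=\big(\sum_{0<\gamma\le T}\cos(\gamma t+\beta_\gamma)|r_\gamma|\big)^m$ one obtains a linear combination of exponentials $e^{i(\sum \ell_\gamma \gamma)t}$ where the integer coefficients $\ell_\gamma$ satisfy $\sum|\ell_\gamma|\le m$. Integrating over $t\in[2,X]$, a term with $\sum\ell_\gamma\gamma=0$ contributes its full ``mass'' (matching the expectation in the random model, by the standard fact that $\ex$ of a product of $\cos(\theta_\gamma+\beta_\gamma)$'s picks out exactly the combinations summing to zero, since the $\theta_\gamma$ are i.i.d.\ uniform and hence the only correlations come from the diagonal), while a term with $\sum\ell_\gamma\gamma\ne0$ contributes $O\big(1/|X\sum\ell_\gamma\gamma|\big)$. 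This is where ELI enters: restricting to $m\le M$ for a suitable truncation parameter $M$ (roughly $M\asymp N(T)$), every nonzero linear combination with $\sum|\ell_\gamma|\le M$ satisfies $|\sum\ell_\gamma\gamma|\ge C_\ep e^{-T^{1+\ep}}$, so each off-diagonal term is bounded by $e^{T^{1+\ep}}/X$. Since $T\le(\log X)^{1-\ep}$ we have $e^{T^{1+\ep}}=X^{o(1)}$ provided the exponents match up correctly (this is the whole point of the threshold $T\le(\log X)^{1-\varepsilon}$: $T^{1+\ep'}=(\log X)^{(1-\ep)(1+\ep')}=o(\log X)$ for $\ep'$ small).

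The key steps, in order, are: (1) Fix a truncation parameter $M=\lceil c\, N(T)\rceil$ and write $\exp(sF(t,T))=\sum_{m=0}^{M}\frac{s^m}{m!}F(t,T)^m + R_M(t)$, controlling the tail $R_M$ by a crude bound $|F(t,T)|\le H(T)\asymp(\log T)^A$ and Stirling, which forces the constraint $|s|\le c_0 T(\log T)^{1-A}$ so that $(|s|H(T))^M/M!\ll e^{-N(T)}$; the same truncation is applied to the random-model expectation, with the same tail bound since $|\sum|r_\gamma|\cos\theta_\gamma|\le H(T)$ almost surely. (2) For $m\le M$, expand $F(t,T)^m$ into exponentials and split into diagonal ($\sum\ell_\gamma\gamma=0$) and off-diagonal parts. (3) Observe that the diagonal part, after integrating $\frac1X\int_2^X$, equals exactly the corresponding truncated term of $\ex\big(\exp(s\sum|r_\gamma|\cos\theta_\gamma)\big)$ — this uses that under LI (a consequence of ELI) the $\gamma$'s are $\mathbb{Q}$-linearly independent, so $\sum\ell_\gamma\gamma=0$ with $\sum|\ell_\gamma|\le M$ forces all $\ell_\gamma=0$ when the $\ell_\gamma$ are small, wait — more carefully, the nonzero diagonal solutions are exactly those where the multiset of $\gamma$'s with positive and negative coefficients coincide, matching the diagonal pairings in the moment computation for the i.i.d.\ cosines. (4) Bound the total off-diagonal contribution: there are at most $(2M+1)^{N(T)}\cdot\binom{?}{}$ such terms — more crudely, the number of tuples $(\ell_\gamma)$ with $\sum|\ell_\gamma|\le M$ is at most $(CM)^{N(T)}$ — each weighted by $|s|^m|r_\gamma|$-products of total size at most $(|s|H(T))^m/m!$ summed over $m$, times the ELI lower bound $e^{T^{1+\ep}}/X$ per term; one checks this whole quantity is $\ll e^{-N(T)}$ for $X$ large, again because $e^{T^{1+\ep}}$ and $(CM)^{N(T)}=\exp(O(N(T)\log N(T)))=\exp(O(T(\log T)^2))$ are both $X^{o(1)}$ while we can afford to lose $X^{-1+o(1)}$ and still dominate $e^{-N(T)}$.

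The main obstacle will be step (4): making the off-diagonal bound genuinely work requires that the number of off-diagonal terms, times the per-term ELI bound $e^{T^{1+\ep}}$, times the coefficient sizes, all multiplied together is still $\le X\cdot e^{-N(T)}$ — i.e., $\exp(O(T(\log T)^2))\cdot e^{T^{1+\ep}}\cdot(\text{coeff sum}) \le X^{1-o(1)}$. Since $N(T)\ll T\log T$ and $T\le(\log X)^{1-\varepsilon}$, we have $T(\log T)^2=o(\log X)$ and $T^{1+\ep'}=o(\log X)$ for $\ep'$ chosen small relative to $\varepsilon$, so the left side is $X^{o(1)}$ and there is room to spare; the delicate point is choosing the constant $c$ in $M=\lceil c N(T)\rceil$ and the constant $c_0$ in the range of $s$ compatibly, so that simultaneously the truncation tails on both sides are $\ll e^{-N(T)}$ and the off-diagonal sum is $\ll e^{-N(T)}$. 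A secondary technical point is justifying the identity in step (3) — that the integer solutions of $\sum\ell_\gamma\gamma=0$ with bounded $\sum|\ell_\gamma|$ correspond precisely, with matching weights, to the nonvanishing terms in $\ex\big(\prod\cos(\theta_\gamma+\beta_\gamma)^{k_\gamma}\big)$ — which follows from LI (so that $\sum\ell_\gamma\gamma=0$ iff $\ell_\gamma=0$ for all $\gamma$, making the diagonal trivial) combined with the elementary Fourier computation for products of cosines of independent uniform angles; in fact under LI the only diagonal term is $\ell\equiv 0$, so the matching reduces to checking the constant terms agree, and the genuine content is entirely in the off-diagonal estimate.
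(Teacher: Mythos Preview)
Your proposal is correct and follows essentially the same approach as the paper: truncate the exponential series at roughly $N(T)$ terms, compare moments $\frac{1}{X}\int F(t,T)^k\,dt$ with $\ex\big[(\sum|r_\gamma|\cos\theta_\gamma)^k\big]$ by expanding into exponentials and using ELI on the off-diagonal, and bound the truncation tail via Stirling and the crude bound $|F(t,T)|\le H(T)$. The paper separates the moment comparison into a standalone lemma and bounds the off-diagonal contribution for the $k$-th moment more simply by $O(X^{-2/3}H(T)^k)$ (each of the $2^k$ sign choices times the $|r_\gamma|$-weights summing to $H(T)^k$, times the ELI bound $e^{T^{1+\varepsilon'}}/X\ll X^{-2/3}$), rather than counting lattice points as in your step~(4); your counting works but is more than needed.
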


To prove this result, we show that the Effective LI conjecture implies that the sequence $\{\cos(\gamma  t+ \beta_{\gamma})\}_{0<\gamma\leq T}$ behaves like a sequence of independent random variables as $t$ varies in $[1, X]$, if $T\leq (\log X)^{1-\varepsilon}.$
\begin{lem}\label{LemKeyMomentsSinRand}
Assume the Effective LI conjecture. Let $\varepsilon>0$ be small and fixed. Let $X$ be large and $2\leq T\leq (\log X)^{1-\varepsilon}$ be a real number. Let $k\leq N(T)$ be a positive integer and $0<\gamma_1, \dots, \gamma_k<T$ be imaginary parts of zeros of $\zeta(s)$ (not necessarily distinct). For any real numbers $\beta_1, \dots, \beta_k$ we have 
$$ \frac1X\int_{1}^X\cos(\gamma_1 t+\beta_1)\cos(\gamma_2 t+\beta_2)\cdots \cos(\gamma_k t+\beta_k) dt= \ex\left(\cos(\theta_{\gamma_1}) \cdots\cos(\theta_{\gamma_k})\right) + O\left(X^{-2/3}\right). 
$$
\end{lem}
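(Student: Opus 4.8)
\medskip
\noindent\textbf{Proof sketch (proposal).}
The plan is to expand the product of cosines by Euler's formula, integrate term by term over $[1,X]$, and split the resulting $2^{k}$ summands into a \emph{diagonal} part — those whose frequency $\sum_j\epsilon_j\gamma_j$ vanishes, which will reproduce the expectation — and an \emph{off-diagonal} part, which I will show the Effective LI conjecture renders negligible. Writing $\cos(\gamma_jt+\beta_j)=\tfrac12\big(e^{i(\gamma_jt+\beta_j)}+e^{-i(\gamma_jt+\beta_j)}\big)$ gives
\[
\prod_{j=1}^{k}\cos(\gamma_jt+\beta_j)=2^{-k}\sum_{\epsilon\in\{-1,1\}^{k}}e^{i(\sum_{j}\epsilon_j\gamma_j)t}\,e^{i\sum_{j}\epsilon_j\beta_j},
\]
and averaging over $t\in[1,X]$, a summand with $\sum_j\epsilon_j\gamma_j=0$ contributes $2^{-k}e^{i\sum_j\epsilon_j\beta_j}(1-1/X)$, while one with $\sum_j\epsilon_j\gamma_j=\lambda\neq0$ contributes at most $2^{-k}\cdot 2/(X|\lambda|)$ in modulus.

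For an off-diagonal summand I would group the equal values among $\gamma_1,\dots,\gamma_k$; if the distinct ones are $\delta_1,\dots,\delta_m$ with multiplicities $n_1,\dots,n_m$, then $\lambda=\sum_{0<\gamma\le T}\ell_\gamma\gamma$ with $\ell_\gamma\in\Z$, $|\ell_\gamma|\le k\le N(T)$, and the $\ell_\gamma$ not all zero since $\lambda\neq0$. Applying the Effective LI conjecture with $\varepsilon/2$ in place of $\varepsilon$ gives $|\lambda|\ge C_{\varepsilon/2}\,e^{-T^{1+\varepsilon/2}}$, so (there being at most $2^{k}$ such summands, each with prefactor $2^{-k}$) their total contribution is $\ll e^{T^{1+\varepsilon/2}}/X$; and the hypothesis $T\le(\log X)^{1-\varepsilon}$ gives $T^{1+\varepsilon/2}\le(\log X)^{(1-\varepsilon)(1+\varepsilon/2)}=(\log X)^{1-\varepsilon/2-\varepsilon^{2}/2}=o(\log X)$, whence this part is $\ll X^{-1+o(1)}\ll X^{-2/3}$ for $X$ large. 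I expect this to be the main obstacle: one must lower-bound a nonzero bounded integer combination $\sum_i\ell_i\delta_i$ of ordinates, which is exactly the content of ELI, and its exponent must be taken small relative to the gap $\varepsilon$ between $\log T$ and $\log\log X$ that the hypothesis allows.

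Next I would evaluate the diagonal main term $2^{-k}\sum_{\epsilon:\,\sum_j\epsilon_j\gamma_j=0}e^{i\sum_j\epsilon_j\beta_j}$ (the factor $1-1/X$ costs only $O(1/X)$, the sum having modulus $\le1$). In grouped notation $\sum_j\epsilon_j\gamma_j=\sum_{i=1}^{m}\big(\sum_{l:\gamma_l=\delta_i}\epsilon_l\big)\delta_i$, and since ELI excludes every nontrivial bounded integer relation among distinct ordinates, its vanishing forces $\sum_{l:\gamma_l=\delta_i}\epsilon_l=0$ for each $i$; hence the constraint — and therefore the sum — factors over the distinct values:
\[
2^{-k}\sum_{\epsilon:\,\sum_j\epsilon_j\gamma_j=0} e^{i\sum_j\epsilon_j\beta_j}=\prod_{i=1}^{m}\Bigg(2^{-n_i}\sum_{\substack{\eta\in\{-1,1\}^{n_i}\\ \eta_1+\dots+\eta_{n_i}=0}} e^{i\sum_{l}\eta_l\beta^{(i)}_l}\Bigg),
\]
where $\beta^{(i)}_1,\dots,\beta^{(i)}_{n_i}$ are the phases attached to the copies of $\delta_i$.

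To finish, I would expand $\cos(\theta+\beta^{(i)}_l)$ by Euler's formula and use $\ex\big(e^{im\theta_{\delta_i}}\big)=\mathbf 1_{\{m=0\}}$ for $\theta_{\delta_i}$ uniform on $[-\pi,\pi]$ to recognize the $i$-th factor above as $\ex\big(\prod_{l}\cos(\theta_{\delta_i}+\beta^{(i)}_l)\big)$; multiplying over $i$ and using the independence of the $\theta_{\delta_i}$ yields $\ex\big(\prod_{j}\cos(\theta_{\gamma_j}+\beta_j)\big)$, which is the asserted right-hand side (in the application $\beta_j=\arg r_{\gamma_j}$ depends only on $\gamma_j$, so the shifts cancel in each factor and this is literally $\ex(\cos\theta_{\gamma_1}\cdots\cos\theta_{\gamma_k})$). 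Collecting the diagonal main term together with the $O(1/X)$ and $O(X^{-2/3})$ errors then completes the proof.
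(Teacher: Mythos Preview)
Your proposal is correct and follows essentially the same approach as the paper: expand via Euler's formula, bound the off-diagonal terms using ELI together with $T\le(\log X)^{1-\varepsilon}$, and identify the diagonal sum with the expectation of the random model. Your identification of the diagonal with $\ex\big(\prod_j\cos(\theta_{\gamma_j}+\beta_j)\big)$ via factoring over distinct ordinates is a mild reorganization of the paper's computation of $\ex\big(e^{i\sum_j\delta_j\theta_{\gamma_j}}\big)$, and your parenthetical remark that the reduction to $\ex\big(\prod_j\cos\theta_{\gamma_j}\big)$ requires $\beta_j$ to depend only on $\gamma_j$ (as in the intended application) is a valid and useful observation.
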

\begin{proof}
We have
\begin{equation}\label{MomentCos}
\begin{aligned}
&\frac{1}{X}\int_{1}^X\cos(\gamma_1 t+\beta_1)\cos(\gamma_2 t+\beta_2)\cdots \cos(\gamma_k t+\beta_k) dt \\
&= \frac{1}{2^k X}
\int_1^X \left(e^{i\gamma_1 t+i\beta_1}+e^{-i\gamma_1 t-i\beta_1}\right)\cdots \left(e^{i\gamma_k t+i\beta_k}+e^{-i\gamma_k t-i\beta_k}\right) dt \\
& = \frac{1}{2^k} \sum_{\delta_1, \delta_2, \dots, \delta_k \in \{-1, 1\}} e^{i(\delta_1\beta_1+ \cdots + \delta_k \beta_k)} \frac{1}{X}\int_{1}^X  e^{i(\delta_1 \gamma_1+ \cdots + \delta_k \gamma_k)t}dt.
\end{aligned}
\end{equation}
For $\delta_1, \delta_2, \dots, \delta_k \in \{-1, 1\}$, we group the $\gamma_j$ into distinct terms, and write $$\delta_1\gamma_1+\dots+ \delta_k \gamma_k=\ell_{1} \gamma_{\ell_1}+ \cdots + \ell_{J} \gamma_{\ell_{J}}$$ where $1\leq J\leq k$, the $\{\gamma_{\ell_j}\}_{1\le j\leq J}$ are distinct and the $\ell_j$ are integers such that $|\ell_j|\leq k\leq N(T)$  for all $j$.
First, we shall use the Effective LI conjecture to handle the contribution of the ``off-diagonal'' terms 
$\delta_1 \gamma_1+ \cdots + \delta_k \gamma_k\neq 0.$ Indeed, in this case we have
$$\int_1^X  e^{i(\delta_1 \gamma_1+ \cdots + \delta_k \gamma_k)t}dt= \int_1^X  e^{i(\ell_{1} \gamma_{\ell_1}+ \cdots + \ell_{J} \gamma_{\ell_{J}})t}dt \ll \frac{1}{|\ell_{1} \gamma_{\ell_1}+ \cdots + \ell_{J} \gamma_{\ell_{J}}|}\ll_{\varepsilon} e^{T ^{1+\varepsilon}}\ll X^{1/3},$$
by our assumption on $T$. Inserting this estimate in \eqref{MomentCos} we get 
\begin{equation}\label{MomentCos2}
\begin{aligned}
& \frac{1}{X}\int_1^X\cos(\gamma_1 t+\beta_1)\cos(\gamma_2 t+\beta_2)\cdots \cos(\gamma_k t+\beta_k) dt\\
& =  \frac{1}{2^k} \sum_{\substack{\delta_1, \delta_2, \dots, \delta_k \in \{-1, 1\}\\\delta_1 \gamma_1+ \cdots + \delta_k \gamma_k=0 }} e^{i(\delta_1\beta_1+ \cdots + \delta_k \beta_k)} +O\left(X^{-2/3}\right).
\end{aligned}
\end{equation}
We  now show that the main term on the right hand side of this asymptotic formula equals the corresponding moment of the random model. First, note that $\theta_{\gamma}+\beta_{\gamma}$ has the same distribution as $\theta_{\gamma}$. Therefore, similarly to \eqref{MomentCos} we have 
\begin{equation}\label{MomentsRandom}
\begin{aligned}
\ex\left(\cos(\theta_{\gamma_1}) \cdots\cos(\theta_{\gamma_k})\right)&=
\ex\left(\cos(\theta_{\gamma_1}+\beta_1) \cdots\cos(\theta_{\gamma_k}+\beta_k)\right)\\
&= \frac{1}{2^k}
\ex\Big(\left(e^{i\theta_{\gamma_1}+i\beta_1}+e^{-i\theta_{\gamma_1}-i\beta_1}\right)\cdots \left(e^{i\theta_{\gamma_k}+i\beta_k}+e^{-i\theta_{\gamma_k}-i\beta_k}\right)\Big)\\
&= \frac{1}{2^k} \sum_{\delta_1, \delta_2, \dots, \delta_k \in \{-1, 1\}} e^{i(\delta_1\beta_1+ \cdots + \delta_k \beta_k)}  \ex\left(e^{i(\delta_1 \theta_{\gamma_1}+ \cdots + \delta_k \theta_{\gamma_k})}\right).
\end{aligned}
\end{equation}
It remains to show that the last expectation is $0$ unless $\delta_1\gamma_1+\dots+ \delta_k \gamma_k=0,$ in which case it equals $1$. First assume that $\delta_1\gamma_1+\cdots+ \delta_k \gamma_k=0.$ This implies $\ell_{1} \gamma_{\ell_1}+ \cdots + \ell_{J} \gamma_{\ell_{J}}=0$ and hence $\ell_1=\dots=\ell_J=0$ since the $\{\gamma_{\ell_j}\}_{1\leq j\leq J}$ are linearly independent over the rationals. Therefore, we deduce that $\delta_1 \theta_{\gamma_1}+ \cdots + \delta_k \theta_{\gamma_k}= \ell_1 \theta_{\gamma_{\ell_1}}+ \cdots+ \ell_J \theta_{\gamma_{\ell_J}}=0$, and in particular $\ex\left(e^{i(\delta_1 \theta_{\gamma_1}+ \cdots + \delta_k \theta_{\gamma_k})}\right)=1.$ We now assume that  $\delta_1\gamma_1+\dots+ \delta_k \gamma_k=\ell_{1} \gamma_{\ell_1}+ \cdots + \ell_{J} \gamma_{\ell_{J}}\neq 0.$ Then there exists $1\leq j\leq J$ such that $\ell_j\neq 0$. In this case, by the independence of the $\{\theta_{\gamma}\}_{\gamma}$ we obtain 
\begin{align*}
\ex\left(e^{i(\delta_1 \theta_{\gamma_1}+ \cdots + \delta_k \theta_{\gamma_k})}\right)&= \ex\left(e^{i(\ell_1 \theta_{\gamma_{\ell_1}}+ \cdots+ \ell_J \theta_{\gamma_{\ell_J}})}\right)\\
&= \ex\left(e^{i\ell_j \theta_{\gamma_{\ell_j}}}\right)\ex\left(e^{i(\ell_1 \theta_{\gamma_{\ell_1}}+ \cdots+ \ell_{j-1} \theta_{\gamma_{\ell_{j-1}}}+ \ell_{j+1} \theta_{\gamma_{\ell_{j+1}}}+ \cdots + \ell_J \theta_{\gamma_{\ell_J}})}\right)=0,
\end{align*}
since $\ex\left(e^{i\ell_j \theta_{\gamma_{\ell_j}}}\right)=0.$ This completes the proof.
\end{proof}


We now prove Proposition \ref{ProLaplaceRandom}.
\begin{proof}[Proof of Proposition \ref{ProLaplaceRandom}] Let $R=N(T)$. First, note that 
$$
\exp\big(s F(t, T) \big)= \sum_{k=0}^R \frac{s^k}{k!}F(t, T)^k + E_1,
$$
where 
$$ E_1 \ll \sum_{k>R} \frac{(sH(T))^k}{k!}\ll \sum_{k>R} \left(\frac{3sH(T) }{k}\right)^k \leq \sum_{k>R} \left(\frac{3 sH(T)}{R}\right)^k \leq e^{-R},$$
by Assumption 1 and Stirling's formula, if $c_0$ is suitably small. Therefore, we deduce that 
\begin{equation}\label{ApproxLaplaceMoments}
\frac{1}{X}\int_1^X \exp\big(s F(t, T) \big)dt= \sum_{k=0}^R \frac{s^k}{k!} \frac{1}{X}\int_1^X F(t, T)^k dt + O(e^{-R}).
\end{equation}
Let $1\leq k\leq R$ be a positive integer. Expanding the inner integrand in the right hand side of \eqref{ApproxLaplaceMoments} and using Lemma \ref{LemKeyMomentsSinRand} we obtain
\begin{equation}\label{ExpandMomentGamma}
\begin{aligned}
&\frac{1}{X}\int_1^X F(t, T)^k dt\\
&= \sum_{0<\gamma_1,\dots, \gamma_k\leq T} |r_{\gamma_1}r_{\gamma_2} \cdots r_{\gamma_k}|\frac{1}{X}\int_1^X\cos(\gamma_1 t+\beta_{\gamma_1})\cos(\gamma_2 t+\beta_{\gamma_2})\cdots \cos(\gamma_k t+\beta_{\gamma_k}) dt\\
& = \sum_{0<\gamma_1,\dots, \gamma_k\leq T} |r_{\gamma_1}r_{\gamma_2} \cdots r_{\gamma_k}|\ex\left(\cos(\theta_{\gamma_1}) \cdots\cos(\theta_{\gamma_k})\right) 
+ O\left(X^{-2/3} H(T)^k\right)\\
& = \ex\left(\left(\sum_{0<\gamma \leq T}|r_{\gamma}|\cos(\theta_{\gamma})\right)^k\right) + O\left(X^{-1/2} \right),
\end{aligned}
\end{equation}
by Assumption 1. On the other hand, similarly to \eqref{ApproxLaplaceMoments} one has 
$$\ex\left(\exp\left(s \sum_{0<\gamma \leq T}|r_{\gamma}|\cos(\theta_{\gamma})  \right)\right)= \sum_{k=0}^R \frac{s^k}{k!}\ex\left(\left(\sum_{0<\gamma\leq T}  |r_{\gamma}|\cos(\theta_{\gamma})\right)^k\right) + O(e^{-R}).$$
Combining this estimate with \eqref{ApproxLaplaceMoments} and \eqref{ExpandMomentGamma} we deduce that 
\begin{align*}
\frac{1}{X}\int_2^X \exp\big(s F(t, T) \big)dt 
& = \sum_{k=0}^R \frac{s^k}{k!} \ex\left(\left(\sum_{0<\gamma \leq T}  |r_{\gamma}|\cos(\theta_{\gamma})\right)^k\right) + O\left( e^{s}X^{-1/2}+ e^{-R}\right) \\
& = \ex\left(\exp\left(s\sum_{0<\gamma \leq T} |r_{\gamma}|\cos(\theta_{\gamma})\right)\right)+ O(e^{-R}),
\end{align*}
as desired.
\end{proof}


We end this section by proving Proposition \ref{ProDistributionSumZeros0}. 

\begin{proof}[Proof of Proposition \ref{ProDistributionSumZeros0}]
We start by proving \eqref{EqMeasureLargeF}. Let $s=c_0 T(\log T)^{1-A}$, where $c_0$ is the constant from Proposition \ref{ProLaplaceRandom}. First, we shall establish a lower bound for the moment generating function
$$ \ex\left(\exp\left(s\sum_{0<\gamma \leq T} |r_{\gamma}|\cos(\theta_{\gamma})\right)\right).$$
By the independence of the $\theta_{\gamma}$ we have 
$$ \ex\left(\exp\left(s\sum_{0<\gamma \leq T}  |r_{\gamma}|\cos(\theta_{\gamma})\right)\right)= \prod_{0<\gamma \leq T}\ex\Big(\exp\big(s |r_{\gamma}|\cos(\theta_{\gamma})\big)\Big)=\prod_{0<\gamma \leq T} I_0(s|r_{\gamma}|), $$
where $I_0(t)=\sum_{n=0}^{\infty} t^{2k}/(2^k k!)^2$ is the modified Bessel function of the first kind. Note that $I_0(t)\geq 1$ for all $t\in \mathbb{R}$. Moreover, for $t\geq 0$ we have 
\begin{equation}\label{EstimateBessel}
I_0(t)= \frac{1}{2\pi}\int_{-\pi}^{\pi}e^{t\cos u} du\geq \frac{1}{2\pi} \int_0^{\pi/3} e^{t\cos u} du \geq \frac{e^{t/2}}{6}.
\end{equation}
Combining these estimates we obtain 
\begin{equation}\label{LowerBoundLaplace}
\ex\left(\exp\left(s\sum_{0<\gamma \leq T}  |r_{\gamma}|\cos(\theta_{\gamma})\right)\right) \geq \prod_{0<\gamma\leq \sqrt{T}} \frac{e^{s|r_{\gamma}|/2}}{6} \geq \exp\left(c_3 s(\log T)^A\right),
\end{equation}
for some positive constant $c_3$, by Assumption 1 and since $N(\sqrt{T})=o(s).$
We now use Proposition \ref{ProLaplaceRandom}, which implies 
\begin{equation}\label{LowerBoundLaplaceF}\frac{1}{X}\int_1^X \exp\big(s F(t, T) \big)dt \gg \exp\left(c_3 s(\log T)^A\right).
\end{equation}
Let $\mathcal{A}(X)$ be the set of points $t\in [1, X]$ such that 
$$F(t, T)\geq \frac{c_3}{2} (\log T)^A.$$ 
Then we have 
\begin{align*}
\int_1^X \exp\big(s F(t, T) \big)dt &= \int_{\mathcal{A}(X)} \exp\big(s F(t, T) \big)dt+ \int_{[1, X]\setminus\mathcal{A}(X)} \exp\big(s F(t, T) \big)dt\\
&= \int_{\mathcal{A}(X)}  \exp\big(s F(t, T) \big)dt +O\left(X\exp\left(\frac{c_3}{2}s (\log T)^A\right)\right).
\end{align*}
Hence, it follows from \eqref{LowerBoundLaplaceF} that 
$$\int_{\mathcal{A}(X)}  \exp\big(s F(t, T) \big)dt\gg 
X \exp\left(c_3 s(\log T)^A\right).$$
On the other hand, it follows from Assumption 1 that $F(t, T)\leq c_4 (\log T)^A$ for some positive constant $c_4$. Therefore, we deduce that 
\begin{equation}\label{LowerBoundMeasureA}\textup{meas}(\mathcal{A}(X))\geq \exp\left(-c_4 s(\log T)^A\right)\int_{\mathcal{A}(X)}  \exp\big(s F(t, T) \big)dt\gg 
X\exp\left(-c_4 s(\log T)^A\right).
\end{equation}
This yield the desired bound with $c_1= c_3/2$ and $c_2=c_0c_4$, by our choice of $s$. 

To prove \eqref{EqMeasureSmallF} we choose $s=-c_0 T(\log T)^{1-A}$ and proceed similarly to the proof of \eqref{LowerBoundLaplaceF} to show that 
$$ \frac{1}{X}\int_2^X \exp\big(sF(t, T))dt \gg \exp(-c_3s (\log T)^A),$$
by using that $I_0(t)$ is an even function. The result follows along the same lines of the proof of \eqref{LowerBoundMeasureA}.

\end{proof}

\section{Shortening the sum over the zeros: Proofs of Lemmas \ref{LemFejer} and \ref{LemSecondMomentError}}
In this section we establish Lemmas \ref{LemFejer} and \ref{LemSecondMomentError} using properties of the Fejer kernel. 

\begin{proof}[Proof of Lemma \ref{LemFejer}] First, we observe that 
$$ 
\int_{-\infty}^{\infty} \frac{T}{2\pi} K\left(\frac{Tu}{2\pi}\right) F(t+u, Y) du = \re\left(\sum_{0<\gamma\leq Y}    e^{i\gamma t+\beta_{\gamma}}|r_{\gamma}|  \int_{-\infty}^{\infty} \frac{T}{2\pi} K\left(\frac{Tu}{2\pi}\right) e^{i\gamma u} du\right).
$$
Now, using the change of variables $v= Tu/(2\pi)$ we get 
$$\int_{-\infty}^{\infty} \frac{T}{2\pi} K\left(\frac{Tu}{2\pi}\right) e^{i\gamma u} du= \int_{-\infty}^{\infty} K(v) \exp\left(2\pi i v \frac{\gamma}{T}\right)dv = \max\left(0, 1- \frac{\gamma}{T}\right),$$
by \eqref{Fejer}. Thus, we obtain 
\begin{equation}\label{FejerCut}
\int_{-\infty}^{\infty} \frac{T}{2\pi} K\left(\frac{Tu}{2\pi}\right) F(t+u, Y) du= \re\left(\sum_{0<\gamma \leq T} e^{i\gamma t+\beta_{\gamma}} |r_{\gamma}| \left(1- \frac{\gamma}{T}\right)\right)= F(t, T) + O\left(\frac{L(T)}{T}\right).
\end{equation}
Moreover, for any $Z>0$ we have 
$$ \int_{|u|>Z}\frac{T}{2\pi} K\left(\frac{Tu}{2\pi}\right) F(t+u, Y) du \ll T\int_{|u|>Z} \frac{H(Y)}{(Tu)^2}du \ll \frac{(\log Y)^A}{ZT},$$
by Assumption 1 since $|K(u)|\leq 1/(\pi u)^2$ for all $u\neq 0.$ Inserting this bound in \eqref{FejerCut} and using our assumption on $Z$ completes the proof. 
\end{proof}
To prove Lemma \ref{LemSecondMomentError} we need the following result, which is established by Akbary, Ng and Shahabi in \cite{ANS}.
\begin{lem}\label{LemmaAkbaryNg}
Let $\{r_{\gamma}\}_{\gamma>0}$ be a sequence of complex numbers satisfying Assumption 3. There exists a positive constant $\alpha$, that depends only on $\theta$, such that for all real numbers $X_2>X_1>1$ we have
$$\sum_{X_1<\gamma_1, \gamma_2\leq X_2} |r_{\gamma_1}r_{\gamma_2}|\min\left(1, \frac{1}{|\gamma_1-\gamma_2|}\right) \ll X_1^{-\alpha},$$
where the implicit constant is absolute.
\end{lem}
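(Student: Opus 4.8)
The plan is to reduce the double sum to a known bound in \cite{ANS} by splitting the range of the difference $|\gamma_1-\gamma_2|$ dyadically. Write $S:=\sum_{X_1<\gamma_1,\gamma_2\le X_2}|r_{\gamma_1}r_{\gamma_2}|\min(1,1/|\gamma_1-\gamma_2|)$. I would first isolate the ``near-diagonal'' contribution where $|\gamma_1-\gamma_2|\le 1$, in which the factor $\min(1,1/|\gamma_1-\gamma_2|)$ is just $1$: here one uses the vertical density of zeros, namely that the number of $\gamma_2$ with $|\gamma_1-\gamma_2|\le 1$ is $O(\log\gamma_1)$, together with Cauchy--Schwarz against Assumption 3. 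For the ``off-diagonal'' part, decompose the range $|\gamma_1-\gamma_2|\in(1,X_2]$ into dyadic blocks $|\gamma_1-\gamma_2|\asymp 2^j$; on each block the weight is $\asymp 2^{-j}$, and one controls $\sum|r_{\gamma_1}r_{\gamma_2}|$ over pairs at distance $\asymp 2^j$ via Cauchy--Schwarz and Assumption 3 again, summing the resulting geometric-type series in $j$.

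Concretely, the key input is Assumption 3 in the form $\sum_{0<\gamma\le T}\gamma^2|r_\gamma|^2\ll T^\theta$, which by partial summation (or by restricting to $X_1<\gamma\le X_2$) gives $\sum_{X_1<\gamma\le X_2}|r_\gamma|^2\ll X_1^{-2}X_2^{\theta}$ on dyadic ranges, and more usefully a bound like $\sum_{X_1<\gamma\le X_2}|r_\gamma|\ll X_1^{-1+\theta/2}(\log X_2)^{1/2}$ after Cauchy--Schwarz with the zero-counting function $N(T)\ll T\log T$. Feeding this into the dyadic decomposition, each block contributes at most $2^{-j}\big(\sum_{X_1<\gamma\le X_2}|r_\gamma|\big)^2\ll 2^{-j}X_1^{-2+\theta}\log X_2$, and summing over $0\le j\le \log_2 X_2$ the geometric series in $2^{-j}$ converges, leaving a bound $\ll X_1^{-2+\theta}\log X_2$; since $\theta<3-\sqrt3<2$, this is $\ll X_1^{-\alpha}$ for any $\alpha<2-\theta$, absorbing the logarithm. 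The near-diagonal term is handled identically and is in fact smaller.

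I expect the only real subtlety to be bookkeeping: making sure the power of $X_1$ that comes out is genuinely negative and uniform, i.e. that one extracts a saving $X_1^{-\alpha}$ with $\alpha=\alpha(\theta)>0$ rather than merely $X_2^{o(1)}$, which forces one to be careful to bound sums over $\gamma_1,\gamma_2$ restricted to $(X_1,X_2]$ by powers of the \emph{lower} endpoint $X_1$ wherever possible, using $\gamma_i>X_1$ in the weights $\gamma_i^2|r_{\gamma_i}|^2$. Since this is precisely the computation carried out by Akbary, Ng and Shahabi \cite{ANS} in their proof of \eqref{ExplicitFormulaM} (their condition $\theta<3-\sqrt3$ enters exactly here, to guarantee the explicit formula's error term is $O(1)$), I would simply cite their argument after recording the dyadic reduction above; no new idea is needed beyond what is already in \cite{ANS}.
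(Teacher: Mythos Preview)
Your final move---to cite \cite{ANS}---is exactly what the paper does: its proof of this lemma is a one-line reference to the proof of Theorem~1.2~b) and Corollary~1.3~b) of \cite{ANS}. So at the level of what actually gets written down, you and the paper agree.

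The sketch you offer of that argument, however, contains an error that would prevent it from going through. The claimed bound $\sum_{X_1<\gamma\le X_2}|r_\gamma|\ll X_1^{-1+\theta/2}(\log X_2)^{1/2}$ is false: Cauchy--Schwarz against the zero count gives
\[
\sum_{X_1<\gamma\le X_2}|r_\gamma|\le \big(N(X_2)-N(X_1)\big)^{1/2}\Big(\sum_{X_1<\gamma\le X_2}|r_\gamma|^2\Big)^{1/2}\ll (X_2\log X_2)^{1/2}\,X_1^{(\theta-2)/2},
\]
and the factor $X_2^{1/2}$ cannot be removed (for instance, $r_\gamma=1/\gamma$ satisfies Assumption~3 with any $\theta>1$, yet $\sum_{X_1<\gamma\le X_2}|r_\gamma|\asymp(\log X_2)^2-(\log X_1)^2\to\infty$ as $X_2\to\infty$). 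Consequently each of your dyadic blocks contributes $2^{-j}\big(\sum|r_\gamma|\big)^2\ll 2^{-j}X_2(\log X_2)\,X_1^{\theta-2}$, and the sum over $j$ grows with $X_2$ rather than being bounded by $X_1^{-\alpha}$ uniformly. In the application (Lemma~\ref{LemSecondMomentError}) one takes $X_2=e^{2X}$ while $X_1$ is only a power of $\log X_2$, so this is fatal. Even the sharper AM--GM route, keeping the constraint and using $\sum_{X_1<\gamma_2\le X_2}\min(1,1/|\gamma_1-\gamma_2|)\ll(\log X_2)^2$, only yields $S\ll X_1^{\theta-2}(\log X_2)^2$, still not uniform in $X_2$. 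A tell-tale sign that your sketch is not the actual \cite{ANS} computation is that nothing in it explains the threshold $\theta<3-\sqrt 3$; your argument would only ever require $\theta<2$.
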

\begin{proof}
This follows from the proof of Theorem 1.2 b) of \cite{ANS}. The condition on the sequence $\{r_{\gamma}\}_{\gamma>0}$ corresponds to that in Corollary 1.3 b) of \cite{ANS} (which is a consequence of \cite[Theorem 1.2 b)]{ANS}). 
\end{proof}

 \begin{proof}[Proof of Lemma \ref{LemSecondMomentError}]
Expanding the integrand we obtain 
\begin{equation}
\label{ExpandAverageError}
\begin{aligned}
&\frac{1}{X}\int_1^X \left|\int_{-Z}^Z h K(hu) \ep(t+u, Y, e^{2X}) du\right|^2dt\\
&= \int_{-Z}^Z\int_{-Z}^Z h^2 K(hu) K(hv) \left(\frac{1}{X}\int_2^X \ep(t+u, Y, e^{2X}) \overline{\ep(t+v, Y, e^{2X})} dt \right) du dv.
\end{aligned}
\end{equation}
The inner integral equals
\begin{align*} 
&\frac{1}{X}\int_1^X \ep(t+u, Y, e^{2X}) \overline{\ep(t+v, Y, e^{2X})} dt \\&= \frac{1}{X}\int_1^X \sum_{Y<\gamma_1\leq e^{2X}} e^{i\gamma_1 (t+u)}r_{\gamma_{1}} \sum_{Y<\gamma_2\leq e^{2X}} e^{-i\gamma_2 (t+v)} r_{\gamma_2}  dt\\
&= \sum_{Y<\gamma_1, \gamma_2\leq e^{2X}}e^{i(\gamma_1 u-\gamma_2 v)} r_{\gamma_1}r_{\gamma_2} \frac{1}{X}\int_1^X e^{i(\gamma_1-\gamma_2) t}dt\\
& \ll \sum_{Y<\gamma_1, \gamma_2\leq e^{2X}} |r_{\gamma_1}r_{\gamma_2}|\min\left(1, \frac{1}{|\gamma_1-\gamma_2|}\right) \ll Y^{-\alpha}
\end{align*}
for some positive constant $\alpha$ by Lemma \ref{LemmaAkbaryNg}.
 Inserting this bound in \eqref{ExpandAverageError} gives 
$$
\frac{1}{X}\int_1^X \left|\int_{-Z}^Z h K(hu) \ep(t+u, Y, e^{2X}) du\right|^2dt \ll Y^{-\alpha} \left(\int_{-Z}^Z h K(hu) du\right)^2\ll Y^{-\alpha},
$$
by \eqref{LaplaceFejer0}.
This completes the proof.
 \end{proof}

\section{The error term in the prime number theorem: Proof of Theorem \ref{ThmErrorPNT}}
By the explicit formula \eqref{ExplicitFormulaPsi} we have
\begin{equation}\label{EqExplicitRHPNT}
\frac{\psi(e^t)-e^t}{e^{t/2}}= - 2\sum_{0<\gamma\leq e^{2X}} \frac{\sin(\gamma t)}{\gamma} +O(1),
\end{equation}
for all $1\leq t\leq X$, under the assumption of RH. In order to prove the inequalities \eqref{LimsupPNT} and \eqref{LiminfPNT} we need to establish a more precise version of Proposition \ref{ProDistributionSumZeros0} for the distribution of the extreme values of the sum 
$$F_p(t, T):=2\sum_{0<\gamma\leq T} \frac{\sin(\gamma t)}{\gamma},$$
for $2\leq T\leq (\log x)^{1-\varepsilon}.$ This can be achieved since the sequence $\{1/\gamma\}$ is decreasing and well behaved. In particular, we have the following proposition which we deduce from the results of \cite{GrLa}. 

\begin{pro}\label{ProDistributionSumZeros}
Assume the Effective LI conjecture. Let $X$ be large and $2\leq T\leq (\log X)^{1-\varepsilon}$ be a real number. There exists an absolute constant $C>0$ such that uniformly for $V$ in the range $1\leq V\leq (\log T- \log\log T -C)^2/(2\pi)$ we have 
$$ \frac{1}{X} \textup{meas} \{t\in [1, X] : F_p(t, T)> V\}= \exp\left(-e^{-\eta-1}\sqrt{2\pi V} \exp\left(\sqrt{2\pi V}\right)\left(1+O\left(\frac{\sqrt{\log V}}{V^{1/4}}\right)\right)\right), 
$$
where $$\eta =\int_{0}^1 \frac{\log I_0(u)}{u^2}du + \int_{1}^{\infty} \frac{\log I_0(u)-1}{u^2}du.$$
Moreover, the same result holds for $$ \frac{1}{X} \textup{meas} \{t\in [1, X] : F_p(t, T)< -V\}$$ in the same range of $V$.
\end{pro}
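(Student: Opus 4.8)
The plan is to deduce Proposition~\ref{ProDistributionSumZeros} by transporting the probabilistic large-deviation estimate for the random model into the arithmetic setting via the moment-generating-function comparison of Proposition~\ref{ProLaplaceRandom}, and then quoting the precise tail asymptotics for the random sum from \cite{GrLa}. Concretely, set $r_\gamma = 2/\gamma$ (with the sine rather than cosine normalization, so $\beta_\gamma = -\pi/2$), so that $F_p(t,T) = F(t,T)$ in the notation of \eqref{DefShortSumZeros} with this choice; note $H(T) = \sum_{0<\gamma\le T} 2/\gamma \sim (\log T)^2/(2\pi)$, so Assumption~1 holds with $A=2$, and Assumption~2 holds since $L(T) = \sum_{0<\gamma\le T} 2 \asymp N(T) \ll T\log T = o(T(\log T)^2)$. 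The random model is $\mathcal{X}(T) := \sum_{0<\gamma\le T} (2/\gamma)\cos(\theta_\gamma)$ with the $\theta_\gamma$ i.i.d.\ uniform on $[-\pi,\pi]$.

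First I would recall from \cite{GrLa} (this is exactly the type of random trigonometric sum with coefficients $c_n \sim c/n$ treated there) the sharp large-deviation estimate: for $V$ in the stated range,
\[
\pr\big(\mathcal{X}(T) > V\big) = \exp\left(-e^{-\eta-1}\sqrt{2\pi V}\,\exp\left(\sqrt{2\pi V}\right)\left(1+O\left(\frac{\sqrt{\log V}}{V^{1/4}}\right)\right)\right),
\]
with $\eta$ the constant displayed in the proposition; the upper cutoff $V \le (\log T - \log\log T - C)^2/(2\pi)$ is precisely the threshold below which the tail is governed by the ``conspiracy'' of the largest $O(1)$ coefficients $2/\gamma_1, 2/\gamma_2, \dots$ all aligning, and above which the sum is essentially bounded by $H(T)$. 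The saddle point for this tail occurs at $s \asymp \sqrt{2\pi V}/\log$-type scale; crucially, in the admissible range of $V$ the optimal $s$ satisfies $s = O(\log T) \ll c_0 T (\log T)^{-1}$, so it lies within the range where Proposition~\ref{ProLaplaceRandom} applies.

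The core step is then a standard Chernoff/saddle-point transfer: to get the upper bound for $\frac1X\mathrm{meas}\{t\in[1,X]: F_p(t,T)>V\}$, apply Markov's inequality at the optimal $s$,
\[
\frac1X\mathrm{meas}\{t: F_p(t,T)>V\} \le e^{-sV}\cdot\frac1X\int_1^X e^{sF_p(t,T)}\,dt = e^{-sV}\left(\ex\big(e^{s\mathcal{X}(T)}\big) + O(e^{-N(T)})\right),
\]
and since the error $e^{-sV-N(T)}$ is negligible compared to the main term (as $sV$ is much smaller than $N(T) \gg T\log T$ in this range of $V$), this reproduces the random-model upper bound. For the matching lower bound I would run the usual argument: restrict the expectation $\ex(e^{s\mathcal{X}(T)})$ to the event $\{\mathcal{X}(T) > V\}$ versus its complement, using that on the complement the integrand is at most $e^{sV}$, to show $\frac1X\mathrm{meas}\{t: F_p(t,T) > V(1-o(1))\}$ is bounded below by the random tail probability times $e^{-o(sV)}$; combined with monotonicity in $V$ and the fact that the random tail is ``smooth'' enough (a small multiplicative change in $V$ only perturbs $\sqrt{2\pi V}e^{\sqrt{2\pi V}}$ by the allowed error factor $1+O(\sqrt{\log V}/V^{1/4})$), this yields the stated asymptotic. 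The symmetric statement for $F_p(t,T) < -V$ follows by replacing $s$ with $-s$ and using that $I_0$ is even, exactly as in the proof of \eqref{EqMeasureSmallF} in Proposition~\ref{ProDistributionSumZeros0}.

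The main obstacle is verifying that the saddle-point parameter $s$ really does stay in the window $|s| \le c_0 T(\log T)^{1-A} = c_0 T(\log T)^{-1}$ dictated by Proposition~\ref{ProLaplaceRandom} throughout the full range $1 \le V \le (\log T - \log\log T - C)^2/(2\pi)$, and that the error term $O(e^{-N(T)})$ from that proposition is genuinely subdominant to the tail probability. At the upper end $V \approx (\log T)^2/(2\pi)$ the tail probability is roughly $\exp(-c(\log T)^{1/2}T)$-ish after unwinding, while $N(T) \asymp T\log T$; one must check the arithmetic of these competing exponents carefully, and likewise confirm that $s$ grows only like a power of $\log T$ (not like $T$) so the hypothesis of Proposition~\ref{ProLaplaceRandom} is comfortably met. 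The remaining work — extracting the precise constant $\eta$ and the secondary error term — is entirely contained in the cited results of \cite{GrLa} applied to the random model, so no new computation of that kind is needed here.
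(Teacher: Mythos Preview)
Your overall strategy is in the right spirit, but it takes a detour the paper avoids, and it contains both a quantitative error and a genuine gap.

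\textbf{Comparison with the paper.} The paper does not go through the random model at all. Instead it observes that Lemma~\ref{LemKeyMomentsSinRand} (with all $\beta$'s equal to $-\pi/2$) shows the family $\{\sin(\gamma t)\}_{0<\gamma\le T}$ satisfies the Approximate Independence Hypothesis AIH$(L,Q)$ of \cite{GrLa} with $Q=T$, $L=N(T)$. Theorem~1.2 of \cite{GrLa} is stated for arithmetic sequences verifying AIH, not just for random sums, so it applies \emph{directly} to $F_p(t,T)$ and outputs the displayed tail asymptotic with the parameters $A=2$, $\alpha=1/(2\pi)$, $\beta=2\pi$, $W=(4\pi V+O(1))^{1/2}$. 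No Chernoff transfer is needed; the machinery of \cite{GrLa} already packages the saddle-point analysis.

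\textbf{The quantitative error.} Your claim that the saddle point satisfies $s=O(\log T)$ is wrong by an exponential factor. Since $\log\ex(e^{s\mathcal{X}(T)})=\sum_{\gamma}\log I_0(2s/\gamma)$ and $\log I_0(u)\sim u$ for large $u$, the dominant contribution comes from $\gamma\lesssim s$, and the derivative in $s$ is $\sim (\log s)^2/(2\pi)$. Setting this equal to $V$ gives $s\asymp e^{\sqrt{2\pi V}}$, which at the top of the range $\sqrt{2\pi V}\le \log T-\log\log T-C$ is of size $e^{-C}T/\log T$. This is still (just barely) within the window $|s|\le c_0 T(\log T)^{-1}$ of Proposition~\ref{ProLaplaceRandom} provided $C$ is large enough, so the approach survives --- but your stated reasoning for why it does is incorrect.

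\textbf{The gap.} Your lower-bound sketch does not actually produce the sharp asymptotic. Knowing that $\frac1X\int_1^X e^{sF_p(t,T)}\,dt$ and $\ex(e^{s\mathcal{X}(T)})$ agree up to $O(e^{-N(T)})$ for all admissible $s$ does \emph{not} by itself force the tail measures to agree to within the multiplicative error $1+O(\sqrt{\log V}/V^{1/4})$ in the exponent. The argument you outline (``restrict the expectation to $\{\mathcal{X}(T)>V\}$'') bounds the random probability, not the arithmetic measure; transferring a sharp two-sided saddle-point estimate from the MGF to the tail requires an Esscher-tilting/local-limit step applied to the \emph{arithmetic} distribution, which needs more input than the MGF identity alone. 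This is precisely the content that Theorem~1.2 of \cite{GrLa} supplies once AIH is verified, which is why the paper simply invokes it.
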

\begin{proof}
First, it follows from Lemma \ref{LemKeyMomentsSinRand} (with all the $\beta$'s being $-\pi/2$) that the sequence $\{\sin(\gamma t)\}_{0<\gamma\leq T}$ verifies the Approximate  Independence Hypothesis AIH$(L, Q)$ in \cite{GrLa} with parameters $Q=T$ and $L= N(T)$ for any real number $2\leq T\leq (\log X)^{1-\varepsilon}$, and where the sequence of associated independent random variables is the sequence $\{\cos(\theta_{\gamma})\}_{\gamma>0}$. This sequence of random variables satisfies the assumptions of Theorem 1.2 of \cite{GrLa}, and hence we can apply this result with parameters 
$A=2$, $\alpha=1/(2\pi)$, $\beta=2\pi$, and $W= (4\pi V+O(1))^{1/2}$. Therefore, it follows from Theorem 1.2 and Remark 1.3 of \cite{GrLa} that uniformly for $W\leq \log T- \log\log T- C_0 $ (for some positive constant $C_0$) we have 
$$\frac{1}{X} \textup{meas} \left\{t\in [1, X] : \sum_{0<\gamma\leq T} \frac{\sin(\gamma t)}{\gamma}> V\right\}= \exp\left(-e^{-\eta-1} W e^W\left(1+O\left(\frac{\sqrt{\log W}}{W^{1/2}}\right)\right)\right).$$
Replacing $W$ by $V$ implies the result. The proof for the measure of $t\in [1, X]$ such that $F(t, T)<-V$ follows along the same lines, by considering the sequence $\{\sin(-\gamma t)\}_{0<\gamma\leq T}$ instead. 
\end{proof}
We will now deduce Theorem \ref{ThmErrorPNT} along the same lines of the proof of Theorem \ref{ThmMain}.
\begin{proof}[Proof of Theorem \ref{ThmErrorPNT}]
First, we might assume RH since otherwise the result follows trivially by \eqref{EqRHFALSEPNT}. We only prove \eqref{LimsupPNT} since the proof of \eqref{LiminfPNT} is similar. By the asymptotic formula $N(T)\sim (T\log T)/(2\pi)$ and partial summation, one easily verifies that the sequence $\{1/\gamma\}_{\gamma>0}$ satisfies Assumptions 1, 2 and 3, with $A=2$ and any $\theta>1$. Hence, we may apply Lemmas \ref{LemFejer} and \ref{LemSecondMomentError} as in the proof of Theorem \ref{ThmMain}.

Let $T=(\log X)^{1-\varepsilon}$ and $\mathcal{B}(X)$ be the set of points $t\in [\sqrt{X}, X-\sqrt{X}]$ such that 
$$F_p(t, T)\leq -\frac{1-2\varepsilon}{2\pi } (\log\log X)^2.$$
Moreover, as in the proof of Theorem \ref{ThmMain}, we let $Z=(\log X)^2$ and define $\mathcal{E}(X)$ to be the set of $t\in  [\sqrt{X}, X-\sqrt{X}]$  such that 
$$ \left|\int_{-Z}^Z \frac{T}{2\pi} K\left(\frac{Tu}{2\pi}\right) 
\big(F_p(t+u, e^{2X})-F_p(t+u, X)\big) du\right|>1.$$
Then, it follows from Proposition \ref{ProDistributionSumZeros} together with the estimate \eqref{MeasureEpsilon} that 
$$ \textup{meas}\big(\mathcal{B}(X)\setminus{\mathcal{E}(X)}\big) \gg X\exp\left(-\frac{\log X}{\log\log X}\right).$$ 
Now, if $t\in \mathcal{B}(X)\setminus{\mathcal{E}(X)}$ then using Lemmas \ref{LemFejer} and \ref{LemSecondMomentError} we obtain similarly to \eqref{LBIntFejerF2} that
$$
\int_{-Z}^{Z} \frac{T}{2\pi} K\left(\frac{Tu}{2\pi}\right) F_p(t+u, e^{2X}) du \leq -\frac{1-2\varepsilon}{2\pi } (\log\log X)^2 +O(\log\log X),
$$
since in our case we have  $L(T)/T= N(T)/T\ll \log\log X$. 
For such $t$'s we deduce from \eqref{TruncFejer} that
$$ 
\min_{|u|\leq Z} F_p(t+u, e^{2X}) \leq -\frac{1-2\varepsilon}{2\pi } (\log\log X)^2 + O(\log\log X).
$$
Inserting this estimate in \eqref{EqExplicitRHPNT} and choosing $\varepsilon$ to be arbitrarily small completes the proof.
\end{proof}

\end{document}